\documentclass[11pt]{article}

\usepackage[T1]{fontenc}
\usepackage[utf8]{inputenc}
\usepackage{lmodern}
\usepackage[margin=1in]{geometry}
\usepackage{amsmath,amssymb,amsthm}
\usepackage{booktabs}
\usepackage{graphicx}
\usepackage{float}
\usepackage{flafter}
\usepackage{needspace}
\usepackage{microtype}
\usepackage{xcolor}
\definecolor{linkblue}{RGB}{0,85,170}
\usepackage[colorlinks=true,linkcolor=linkblue,citecolor=linkblue,urlcolor=linkblue]{hyperref}
\hypersetup{
  pdftitle={Rayleigh's conjecture for vibrating plates: simpler planar proof and nonlinear extensions},
  pdfauthor={Dorin Bucur and Richard S. Laugesen},
  pdfkeywords={Rayleigh's conjecture, clamped plate, isoperimetric inequality, nonlinear eigenvalue, signed rearrangement}
}

\newtheorem{theorem}{Theorem}
\newtheorem{proposition}[theorem]{Proposition}
\newtheorem{lemma}[theorem]{Lemma}
\newtheorem{conjecture}[theorem]{Conjecture}
\makeatletter
\let\c@table\c@figure
\makeatother

\newcommand{\R}{\mathbb R}
\newcommand{\norm}[1]{\left\lVert#1\right\rVert}

\title{Rayleigh's conjecture for vibrating plates: simpler planar proof and nonlinear extensions}
\author{Dorin Bucur\thanks{Universit\'e Savoie Mont Blanc, CNRS, LAMA,
73000 Chamb\'ery, France.
\href{mailto:Dorin.Bucur@univ-smb.fr}{\texttt{<Dorin.Bucur@univ-smb.fr>}}.}
\and Richard S. Laugesen\thanks{Dept. of Mathematics, Univ.\ of Illinois Urbana--Champaign, Urbana, IL 61801, USA.
\href{mailto:laugesen@illinois.edu}{\texttt{<laugesen@illinois.edu>}}.}}

\begin{document}
\maketitle
\begingroup
\renewcommand{\thefootnote}{}
\footnotetext[0]{%
\textit{2020 Mathematics Subject Classification.}
Primary 35P15; Secondary 35P30, 35J35.\par
\textit{Keywords.} Rayleigh's conjecture, clamped plate,
isoperimetric inequality, nonlinear eigenvalue, signed rearrangement.}
\endgroup

\begin{abstract}
We give a simpler proof of Nadirashvili's theorem (Rayleigh's conjecture)  
that among vibrating clamped plates of prescribed
area in the plane, the disk has the lowest fundamental tone. The proof uses a one-shot Talenti type comparison for sign-changing solutions of the Poisson equation with mean-zero data, established in a companion paper. The comparison considers just two rearranged sources, rather than optimizing over all admissible sources as in the traditional two-ball technique. This ``paired comparison'' simplification yields immediate nonlinear extensions: the disk minimizes the clamped quotient
$\norm{\Delta u}_q/\norm{u}_p$ among bounded open sets of prescribed area, for every $1\le p,q\le\infty$. Equality is characterized for $q>1$, while exceptional behavior at $q=1$ is identified. In higher dimensions, we prove ball minimality for $p=\infty$ and conjecture minimality for all $p,q$ within the compact embedding range. We also establish the corresponding interval inequality in one dimension.
\end{abstract}

\section{Introduction and main results}\label{sec:introduction}

Which vibrating clamped plate of prescribed area has the lowest
fundamental tone? Rayleigh conjectured the disk is the minimizer,
which Nadirashvili~\cite{Nadirashvili1995} then proved more than a century
later. We give a significantly simpler planar proof and show
that the disk remains optimal when the $L^2$ norms in the
variational formulation are replaced by $L^p$ and $L^q$
norms, for every $1\le p,q\le\infty$.

Nadirashvili's proof and the proof of Ashbaugh and
Benguria~\cite{AshbaughBenguria1995} in dimensions two and three
use the two-ball method: Talenti's elliptic rearrangement~\cite{Talenti1976} is applied
on the nodal domains, followed by minimization over all admissible
sources on the coupled balls. Our \emph{paired comparison}
theorem~\cite{BL2026} reduces this optimization to just two source choices:
the signed rearrangements of the original mean-zero source $f=-\Delta u$
and its negative, each on the equal-area disk. One of the
resulting potentials has $L^p$ norm at least as large as for $u$, while the $L^q$ norm of the Laplacian
 is preserved. This simplified approach applies to 
all $p$ and $q$, without using coupled nonlinear Euler--Lagrange equations that in the nonlinear framework would raise significant difficulties.

\newpage
\subsection*{Definitions}
For a nonempty bounded open set $\Omega\subset\R^n$, write
$\Omega^*$ for the centered ball with the same volume, $|\Omega^*|=|\Omega|$.
The Sobolev space $W_0^{2,q}(\Omega)$ is the
closure of $C_c^\infty(\Omega)$ in $W^{2,q}(\Omega)$, for
$1\le q<\infty$. Write $H_0^k=W_0^{k,2}$ and abbreviate
$\norm{\cdot}_{L^s(\Omega)}$ to $\norm{\cdot}_s$ when the domain is clear. {
For $q=\infty$, we use the ``clamped space'' of Katzourakis and
Parini~\cite[Eq.~(1.2)]{KatzourakisParini2017}:
\[
 W_C^{2,\infty}(\Omega)
 =\left\{u\in\bigcap_{1<r<\infty}W_0^{2,r}(\Omega):
                    \Delta u\in L^\infty(\Omega)\right\}.
\]
\Needspace{7\baselineskip}
Define nonlinear ``clamped plate'' eigenvalues 
\begin{equation}\label{eq:gamma}
 \begin{aligned}
 \Gamma_{p,q}(\Omega)
 &=\inf_{0\ne u\in W_0^{2,q}(\Omega)\cap L^p(\Omega)}
   \frac{\norm{\Delta u}_{L^q(\Omega)}}{\norm{u}_{L^p(\Omega)}},
       && \quad 1\le q<\infty,\\
 \Gamma_{p,\infty}(\Omega)
 &=\inf_{0\ne u\in W_C^{2,\infty}(\Omega)}
   \frac{\norm{\Delta u}_{L^\infty(\Omega)}}{\norm{u}_{L^p(\Omega)}} , && \quad q=\infty ,
 \end{aligned}
\end{equation}
for $1\le p\le\infty$. The $L^p$ restriction in this definition simply serves to exclude infinite denominators. It is automatically satisfied in dimensions one and two,
and for $q=\infty$ in every
dimension, by Sobolev embedding (Section~\ref{sec:attainment}). 
The scaling law for the eigenvalues is
\begin{equation}\label{eq:scaling}
 \Gamma_{p,q}(t\Omega)
 =t^{n(1/q-1/p)-2}\Gamma_{p,q}(\Omega) , \qquad t>0 .
\end{equation}
Later we show the infima in \eqref{eq:gamma} can
equivalently be taken over smooth functions compactly supported in $\Omega$, by Lemma~\ref{lem:smooth-trials}.
}

The special case $(p,q)=(2,2)$ gives the classical first clamped plate eigenvalue 
\[
 \Gamma_{2,2}(\Omega)^2
 =\inf_{0\ne u\in H_0^2(\Omega)}
 \frac{\int_\Omega|\Delta u|^2\,dx}
      {\int_\Omega|u|^2\,dx}.
\]
And with $(p,q)=(1,2)$, the quantity $\Gamma_{1,2}(\Omega)^{-2}$ describes the compliance of a plate under unit-magnitude loads; see the remark at the end of Section~\ref{sec:planarproof}.

\subsection*{Disk minimality for all $p,q$}

\begin{theorem}[Disk minimality for all exponents]\label{thm:disk}
If $\Omega\subset\R^2$ is a nonempty bounded open set and
$1\le p,q\le\infty$, then
\begin{equation}\label{eq:main}
 \Gamma_{p,q}(\Omega)\ge\Gamma_{p,q}(\Omega^*).
\end{equation}

When $q>1$, equality holds in \eqref{eq:main} if and only if
$\Omega$ is a disk.
When $q=1$ and $1\le p<\infty$, equality holds if $\Omega$
is a disk with a set of $H^1$-capacity zero removed.
When $q=1$ and $p=\infty$, both sides of \eqref{eq:main} equal zero.
\end{theorem}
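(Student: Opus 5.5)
The plan is to reduce to smooth trial functions and apply the paired comparison theorem of~\cite{BL2026} to the mean-zero source $f=-\Delta u$.

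\emph{Setup.} By Lemma~\ref{lem:smooth-trials}, I may take $u\in C_c^\infty(\Omega)$. Set $f=-\Delta u$. Since $u$ is compactly supported, $\int_\Omega f\,dx=0$, and also $\int_\Omega f h\,dx=0$ for every harmonic $h$. Extend $f$ by zero and form the signed rearrangements $f^\sharp$ and $(-f)^\sharp$ on $\Omega^*$: the positive part is rearranged as a centered decreasing function on an inner disk, and the negative part as an annular layer near $\partial\Omega^*$. Both are equimeasurable with $f$ and $-f$, so $\norm{f^\sharp}_q=\norm{(-f)^\sharp}_q=\norm{f}_q$ for every $q$, and both have mean zero. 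Let $v_\pm$ be the radial solutions on $\Omega^*$ of $-\Delta v=\pm$-source with clamped data. Radial mean-zero data give $v'(R)=0$ automatically, so I can solve with $v(R)=0$ and obtain $v_\pm\in W_0^{2,q}(\Omega^*)$, or in the clamped space when $q=\infty$. The paired comparison theorem then gives $\max(\norm{v_+}_p,\norm{v_-}_p)\ge\norm{u}_p$ for all $p$. Hence $\Gamma_{p,q}(\Omega^*)\le\norm{f}_q/\norm{u}_p$, and taking the infimum over $u$ proves \eqref{eq:main}.

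\emph{Degenerate cases.} For $q=\infty$ I use the Katzourakis--Parini space, approximate by smooth functions where Lemma~\ref{lem:smooth-trials} allows, and otherwise apply the comparison directly. For $q=1$, $p=\infty$, I would exhibit $u_k$ with $\norm{\Delta u_k}_1$ bounded and $\norm{u_k}_\infty\to\infty$. A logarithmic-type spike works, since $\Delta$ of $\log$ is a point mass, so the $L^1$ norm stays bounded while the sup norm blows up. This shows $\Gamma_{1,\infty}\cdots=0$, i.e.\ $\Gamma_{\infty,1}(\Omega)=0$ on every domain.

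\emph{Equality.} For $q>1$ I need existence of a minimizer on $\Omega$ (Section~\ref{sec:attainment}, by compact embedding). I then need the equality case of the paired comparison. Equality should force $u$ to coincide, up to translation, with one of $v_\pm$, so that $\Omega$ contains a disk of area $|\Omega|$ and is therefore a disk up to a null set. Since $\Omega$ is open, I would then upgrade this to an actual disk, which requires that removing a closed null set strictly increases $\Gamma_{p,q}$ when $q>1$. I expect that to hold because $W^{2,q}$ functions with $q>1$ see sets of positive $W^{2,q}$-capacity, and in the plane points have positive $W^{2,q}$-capacity.

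For $q=1$ I would show that removing an $H^1$-capacity-zero set $E$ does not change $\Gamma_{p,1}$. The idea is to approximate any trial function on the disk by ones vanishing near $E$ with small $\norm{\Delta\cdot}_1$ error, using cutoffs $\eta_k$ with $\norm{\nabla\eta_k}_2\to0$. Controlling $\Delta(\eta_k u)$ requires $\int|\nabla\eta_k||\nabla u|$ and $\int u\,\Delta\eta_k$ to be small. The second term is the delicate one, and I would handle it with capacitary potentials whose Laplacians are measures of small mass.

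The main obstacle is the strict equality characterization for $q>1$. It requires a rigidity statement for the paired comparison together with existence of an extremal, and both need care at $p=\infty$ and $q=\infty$.
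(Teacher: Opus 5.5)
\textbf{Where the argument is sound.} Your inequality argument, the logarithmic spike at $(p,q)=(\infty,1)$, and the capacity cutoffs at $q=1$ all follow the paper's route. In the paper the cutoffs come from the Brezis--Marcus--Ponce lemma with $\norm{\Delta\chi_j}_1\to0$, which also yields $\norm{\nabla\chi_j}_2^2\le\norm{\Delta\chi_j}_1\to0$.

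\textbf{The gap: rigidity of the paired comparison.} The gap is in the equality case for $q>1$. You assume that equality in the paired comparison forces $u$ to be a translate of $v_+$ or $v_-$, but at $p=1$ this is not what the planar comparison gives. Its equality statement also allows $\Omega=D_+\cup D_-$, two disjoint disks carrying translated truncated radial potentials of opposite signs. That configuration must be excluded with the clamped condition, which your argument never uses at this stage. Indeed, $u|_{D_+}\in W_0^{2,r}(D_+)$ forces $\int_{D_+}f\,dx=0$. On the other hand, the radial equation for the positive truncated potential gives $\int_{D_+}f\,dx=|\Omega|F(|D_+|/|\Omega|)>0$, where $F(s)=\int_0^s f^\downarrow(t)\,dt$.

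There is a second omission. The minimizer supplied by the attainment result has $f=-\Delta u$ only in $L^q$, which for $1<q<2$ lies outside the $L^2$ setting of the comparison theorems. Equality cases cannot be inherited from smooth approximants. Lemma~\ref{lem:clamped-equality} verifies that the rigidity proofs still go through for $u\in W_0^{2,r}(\Omega)$ with $r>n/2$, for these reasons:
\begin{itemize}
\item $u_\pm$ and their truncations are admissible test functions;
\item $v$ and $w$ are bounded;
\item the Hardy--Littlewood integrals converge absolutely;
\item $F$ is continuous and concave, and positive on $(0,1)$.
\end{itemize}
So the delicate exponents are $p=1$ and $q<2$, not $p=\infty$ or $q=\infty$ as you anticipated.

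\textbf{The upgrade to an actual disk.} Your step from ``disk up to a null set'' to an actual disk also aims at the wrong statement. Positive $W^{2,q}$-capacity of points only shows that $W_0^{2,q}(B\setminus K)$ is a proper subspace of $W_0^{2,q}(B)$. It does not show that the infimum increases: removing a point changes nothing if some minimizer on the disk vanishes there. Ruling that out is a property of the minimizers, not of capacity, and a fourth-order problem has no maximum principle to supply it.

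The direct argument is enough, and it uses the rigidity you already invoke.
\begin{itemize}
\item The zero extension of the minimizer $u$ is continuous and vanishes at every point of $\Omega^c$. This is Sobolev--Morrey, since $q>1$ in the plane; use any finite $r>1$ when $q=\infty$.
\item This extension agrees almost everywhere, hence everywhere, with a translate of $v$ or of $-w$.
\item Both $v$ and $w$ are strictly positive in $B$. Their sources are radially nonincreasing with mean zero, so \eqref{eq:radial-flux} gives a strictly negative radial derivative on $(0,R)$.
\end{itemize}
Hence no point of the translated disk can lie outside $\Omega$. The exceptional set allowed by the comparison rigidity is therefore empty, and $\Omega$ is a disk.
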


Figure~\ref{fig:range} depicts the parameter square in $(1/p,1/q)$ coordinates. 
Section~\ref{sec:planarproof} proves inequality \eqref{eq:main} and
Sections~\ref{sec:planar-equality} and~\ref{sec:q1-equality}
treat equality for $q>1$ and $q=1$, respectively. 
We do not know whether the
equality statement when $q=1$ and $1 \leq p<\infty$ can be strengthened to ``if and only if''.

{
Several cases of the theorem were known previously, notably Rayleigh's vibrating clamped plate case $(p,q)=(2,2)$ proved by Nadirashvili~\cite{Nadirashvili1995}. Our proof of this case of Theorem~\ref{thm:disk} relies on $L^2$ paired comparison, the proof of which is particularly transparent \cite[Section 4]{BL2026}, hence our claim of a ``simpler planar proof'' for Rayleigh's conjecture.}

The top edge case $q=1$ in Figure~\ref{fig:range} with $1\le p<\infty$ follows by integrating
Fontana and Morpurgo's sharp rearrangement estimate and using their radial approximating sequence
\cite[Theorem~1]{FontanaMorpurgo2014}.
Parini, Ruf and Tarsi proved the $(p,q)=(1,1)$ inequality for $C^{1,1}$ domains
in their study of the clamped $1$-biharmonic problem
and its relaxation~\cite[Theorem~1.2 and Corollary~7.3]{PariniRufTarsi2015}.
At $(p,q)=(\infty,\infty)$, Katzourakis and Parini proved ball
minimality among $C^{1,1}$ domains \cite[Theorem~1.2]{KatzourakisParini2017}. 

On a disk, the eigenvalue infimum
is attained by a positive radial function $u$ with strictly decreasing
profile, when $1\le p\le\infty$ and $1<q\le\infty$, by Proposition~\ref{prop:attainment}(\ref{part:attainment-existence})
and the construction in Section~\ref{sec:planar-equality}.
For $1<q<\infty$, every minimizer is a nonzero constant multiple of this $u$; we omit
the proof of this uniqueness claim.

\paragraph{Remark (explicit values for the disk).}
The eigenvalue $\Gamma_{p,q}$ for a disk can be evaluated directly at every point around the edges of the parameter square. Those formulas are not needed in the paper and so we omit them. The corresponding interval formulas in one dimension are presented in Table \ref{tab:interval-values} below.

\begin{figure}[!p]
 \centering
 \includegraphics[scale=0.85,trim=0.944bp 0 -237.186bp 0]{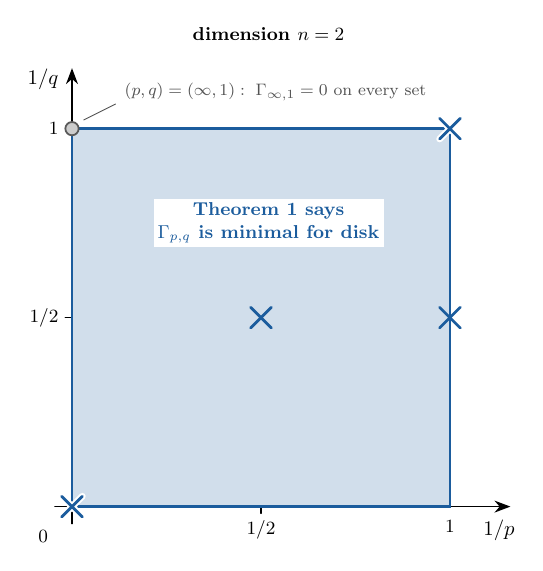}
 \vspace{-6pt}
 \caption{The parameter range for Theorem~\ref{thm:disk} in two dimensions, shown with $(1/p,1/q)$ coordinates.
 Crosses indicate earlier known cases: Nadirashvili at $(p,q)=(2,2)$,
 Ashbaugh--Bucur--Laugesen--Leylekian at $(1,2)$,
 Katzourakis--Parini at $(\infty,\infty)$ and
 Parini--Ruf--Tarsi at $(1,1)$. 
 The top edge $q=1$ follows from
 Fontana--Morpurgo. The gray dot in the top left 
 marks a special case where $\Gamma_{\infty,1}=0$.}
 \label{fig:range}
 \par
 \vspace{12pt}
 \includegraphics[scale=0.85]{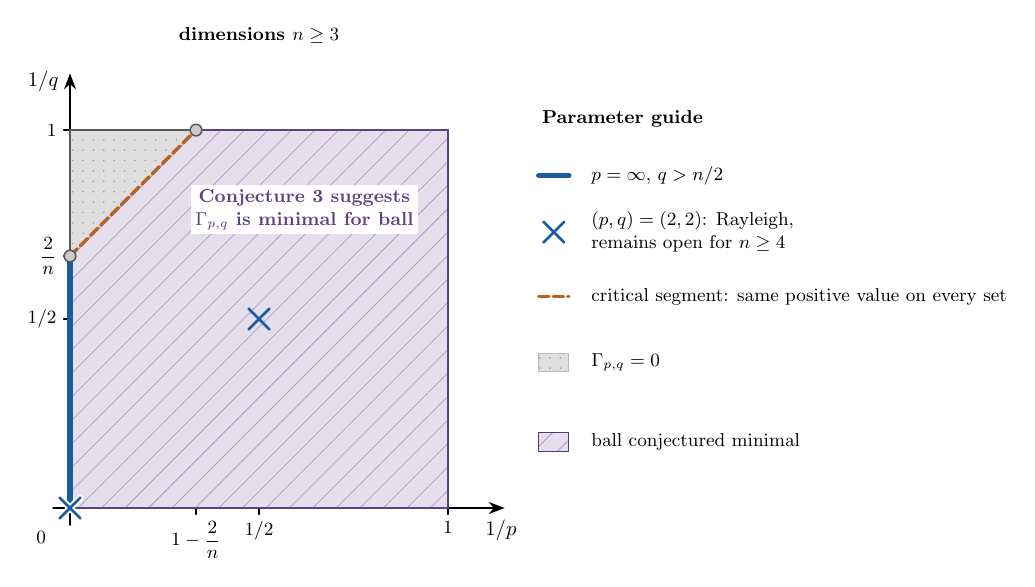}
 \vspace{-6pt}
 \caption{Dimensions $n \geq 3$. Conjecture \ref{conj:nd-gamma} is raised throughout the hatched parameter region, which is the compact embedding range $1/q<1/p+2/n$ (drawn to scale at
 $n=3$). Theorem~\ref{thm:nd-infty} proves it along the solid blue
 segment, where $p=\infty$. Crosses mark known results by Ashbaugh--Benguria at $(p,q)=(2,2)$ for dimension three, and
 Katzourakis--Parini at $(\infty,\infty)$ for all dimensions. 
 Along the orange dashed critical segment, $\Gamma_{p,q}$ takes a positive value independent
 of shape, while $\Gamma_{p,q}$ equals zero in the gray dotted region, including the outer edges and gray dot endpoints (Proposition~\ref{prop:nd-noncompact}).}
 \label{fig:three-range}
\end{figure}

\subsection*{Higher dimensional ball minimality: proved for $p=\infty$, conjectured in general}
\label{sec:higher-dimensions}

The paired comparison also proves ball minimality in higher
dimensions when $p=\infty$.

\begin{theorem}[Higher dimensional ball minimality, for $p=\infty$]
\label{thm:nd-infty}
Suppose $n\ge3$. If $\Omega\subset\R^n$ is nonempty, bounded
and open, and $n/2<q\le\infty$, then
\begin{equation}\label{eq:nd-infty}
 \Gamma_{\infty,q}(\Omega)\ge\Gamma_{\infty,q}(\Omega^*).
\end{equation}
Equality holds if and only if $\Omega$ is a ball.
\end{theorem}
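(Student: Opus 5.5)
The plan is to reduce the inequality to a single trial function on $\Omega^*$ built by the paired comparison theorem of~\cite{BL2026}. By Lemma~\ref{lem:smooth-trials}, it suffices to take $u\in C_c^\infty(\Omega)$, $u\ne0$. Set $f=-\Delta u$. Since $u$ has compact support, $f$ integrates to zero against every harmonic function on $\Omega$, and in particular against constants, so $\int_\Omega f\,dx=0$.

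Let $f^\sharp$ be the signed (Schwarz-type) rearrangement of $f$ on $\Omega^*$, and let $v_\pm$ solve the Poisson problems $-\Delta v_\pm=\pm f^\sharp$ (or $(\pm f)^\sharp$) on $\Omega^*$ with the clamped normalization from~\cite{BL2026}. Rearrangement preserves the distribution function, so $\norm{\Delta v_\pm}_{L^q(\Omega^*)}=\norm{f}_{L^q(\Omega)}$ for every $q$. The paired comparison theorem says that one of $v_+,v_-$ has $L^p$ norm at least $\norm{u}_p$. For $p=\infty$, I expect it gives pointwise control: $\max u\le \max v_+$ and $\max(-u)\le\max(-v_-)$, or the analogous statement. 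Either way, $\max(\norm{v_+}_\infty,\norm{v_-}_\infty)\ge\norm{u}_\infty$.

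I next need $v_\pm$ to be admissible for $\Gamma_{\infty,q}(\Omega^*)$, that is, $v_\pm\in W_0^{2,q}(\Omega^*)$, or $W_C^{2,\infty}$ when $q=\infty$. The key point is that the mean-zero radial source on a ball produces a radial potential whose gradient vanishes on the boundary, since the flux equals $\int f^\sharp=0$. Elliptic regularity then gives $v_\pm\in W^{2,q}$, and the condition $q>n/2$ makes $v_\pm$ continuous and bounded, so the $L^\infty$ norms are meaningful. With this, $\Gamma_{\infty,q}(\Omega^*)\le\norm{f}_q/\norm{u}_\infty$. Taking the infimum over $u$ proves~\eqref{eq:nd-infty}.

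The reason this works in every dimension only for $p=\infty$ is that the $L^\infty$ comparison is pointwise in the nodal-domain picture and does not need the two-dimensional area-coupling that handles general $p$. I expect this step, confirming that the companion result really supplies the $L^\infty$ statement for $n\ge3$, to be the crux.

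For equality, I first obtain a minimizer $u$ on $\Omega$ from Proposition~\ref{prop:attainment}, using compactness since $q>n/2$. Suppose $\Gamma_{\infty,q}(\Omega)=\Gamma_{\infty,q}(\Omega^*)$. Then the chain above is tight, so the maximum of $|u|$ is matched by that of the corresponding $v_\pm$. The equality case of the Talenti-type comparison then forces the nodal domain on which the maximum is attained to be a ball carrying essentially all the mass. I would argue this as in Section~\ref{sec:planar-equality}. Equality in the Talenti step forces the superlevel sets of $u$ to be balls; the isoperimetric equality case applies to them through the coarea/Pólya--Szegő step. Near $\partial\Omega$, the clamped condition $\nabla u=0$ combined with $\int f=0$ prevents any leftover region. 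This gives $\Omega=$ ball up to a null set, and openness yields a genuine ball.

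The main obstacle is this equality analysis. A failure mode is that $u$ may change sign and the extremal $v$ may have its positive part on a strict sub-ball. I would handle it by showing that the minimizer on $\Omega^*$ is positive and radial with strictly decreasing profile, so the sign-changing configuration cannot be extremal.
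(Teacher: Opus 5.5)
Your proof of the inequality is the paper's proof. You reduce to $u\in C_c^\infty(\Omega)$ by Lemma~\ref{lem:smooth-trials}, invoke the $L^\infty$ paired comparison \cite[Theorem~2]{BL2026}, which holds in every dimension, and use the vanishing flux of a mean-zero radial source to see that the rearranged potentials are clamped and hence admissible. One correction: the second source must be $(-f)^*$, the signed rearrangement of $-f$, not $-f^*$. The latter only produces $-v$, and $\norm{u}_\infty\le\norm{v}_\infty$ can fail when the negative part of $f$ is the more concentrated one. Both potentials $v,w$ are nonnegative and radially decreasing, and the comparison reads $\max u\le v(0)$, $\max(-u)\le w(0)$.

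The equality argument has a genuine gap at its last step. The claim ``ball up to a null set, and openness yields a genuine ball'' is false: $B\setminus\{0\}$, or $B$ minus a closed segment, is open and agrees with $B$ up to a null set. Openness excludes a null set added outside the ball, but not a closed null set removed from inside it. The paper's own planar $q=1$ case (Proposition~\ref{prop:capacity-removal}) shows that such removed sets really can preserve equality once continuity is lost, so this step must use $q>n/2$. What you need is a pointwise identification: after translation, $u=v$ or $u=-w$ on $B$, with $v,w>0$ at every point of $B$. Since $q>n/2$, the zero extension of $u$ is continuous and vanishes at every point of $\Omega^c$, so no point of $B$ can be missing from $\Omega$. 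Knowing only that superlevel sets are balls does not give you this.

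The rigidity step before it is also not correctly identified.
\begin{itemize}
\item Other nodal regions are excluded not by ``$\nabla u=0$ and $\int f=0$'' but by strict positivity of $F(s)=\int_0^s f^\downarrow(t)\,dt$ on $(0,1)$. The Talenti bound for $\max u$ integrates a positive multiple of $F(\sigma/|\Omega|)$ over $0<\sigma<|\{u>0\}|$, while $v(0)$ is the same integral over $0<\sigma<|\Omega|$. So equality forces $|\{u>0\}|=|\Omega|$.
\item The ``failure mode'' you describe does not arise, since the extremal potential is positive on the whole ball. Your proposed fix via positivity or uniqueness of minimizers on $\Omega^*$ says nothing about $u$ on $\Omega$, and the paper does not prove that uniqueness.
\item The minimizer from Proposition~\ref{prop:attainment} lies only in $W_0^{2,q}$, with $q$ possibly below $2$ (for instance $n=3$, $3/2<q<2$). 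So you cannot simply rerun the chain you proved for $C_c^\infty$ trial functions, and the comparison in \cite{BL2026} is stated for $L^2$ sources.
\end{itemize}
The paper's Lemma~\ref{lem:clamped-equality} extends the comparison and its equality case to $u\in W_0^{2,r}$ with $r>n/2$. Through the equality statement of \cite[Theorem~2]{BL2026}, it yields $\Omega=c+B$ up to a set of zero capacity and $u=v(\cdot-c)$ or $u=-w(\cdot-c)$. The continuity argument above then removes the exceptional set.
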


Section~\ref{sec:all-dimensions} proves the inequality and
Section~\ref{sec:nd-equality} handles the equality case.

\begin{conjecture}[Higher dimensional ball minimality]
\label{conj:nd-gamma}
Suppose $n\ge3$. If $\Omega\subset\R^n$ is nonempty, bounded and open,
and $1\le p,q\le\infty$ satisfy
\[
 \frac1q<\frac1p+\frac2n,
\]
then
\[
 \Gamma_{p,q}(\Omega)\ge\Gamma_{p,q}(\Omega^*).
\]
\end{conjecture}

Figure~\ref{fig:three-range} summarizes the theorem and conjecture along with known special cases.
The assumption $1/q<1/p+2/n$ specifies the compact embedding
range (Section~\ref{sec:attainment}). Outside it,
$\Gamma_{p,q}$ is independent of the shape of the domain (Proposition~\ref{prop:nd-noncompact}).

The case $(p,q)=(2,2)$ is already settled when $n=3$ by Ashbaugh and Benguria's proof \cite{AshbaughBenguria1995} of Rayleigh's clamped plate conjecture. Rayleigh's conjecture remains open for $n\ge4$. 

\Needspace{12\baselineskip}
\subsection*{One dimensional interval minimality for all $p,q$}

In one dimension, $\Delta u$ means $u''$. Closing up the gaps between components of the domain 
proves interval minimality for all parameters $p$ and $q$ as shown in
Section~\ref{sec:one-dimension}, with strict inequality for disconnected sets later proved in Section~\ref{sec:interval-equality}.

\begin{proposition}[Interval minimality for all $p,q$]
\label{prop:interval}
Let $\Omega\subset\R$ be a nonempty bounded open set, so that $\Omega^*$ is an interval of the same length. If $1\le p,q\le\infty$ then 
\begin{equation}\label{eq:interval-comparison}
 \Gamma_{p,q}(\Omega)\ge\Gamma_{p,q}(\Omega^*).
\end{equation}
Equality holds if and only if $\Omega$ is an interval. 
\end{proposition}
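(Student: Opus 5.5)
The approach is to close up the gaps between the components of $\Omega$. Write $\Omega=\bigcup_j I_j$ as a countable disjoint union of open intervals $I_j=(a_j,b_j)$, with $\sum_j|I_j|=|\Omega|$.

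\textbf{Reduction to finitely many components.} By Lemma~\ref{lem:smooth-trials} it suffices to test functions $u\in C_c^\infty(\Omega)$. The support of such a $u$ is compact, so it meets only finitely many components $I_1,\dots,I_m$, and on each of them $u$ vanishes near the endpoints.

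\textbf{Translation.} Translate each piece $u|_{I_j}$ so that the intervals $I_1,\dots,I_m$ become adjacent and fill an interval $J$ of length $\sum_{j\le m}|I_j|\le|\Omega|$. The result is a function $v\in C_c^\infty(J)$: the pieces glue smoothly because each vanishes identically near the endpoints of its interval. Translation preserves the distribution of $u$ and of $u''$, so $\norm{v''}_q=\norm{u''}_q$ and $\norm{v}_p=\norm{u}_p$ for all $1\le p,q\le\infty$.

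\textbf{Domain monotonicity.} Since $J$ can be placed inside an interval congruent to $\Omega^*$, and $C_c^\infty(J)\subset C_c^\infty(\Omega^*)$, the Rayleigh quotient of $u$ is at least $\Gamma_{p,q}(\Omega^*)$. Taking the infimum over $u$ gives \eqref{eq:interval-comparison}. For $q=\infty$, Lemma~\ref{lem:smooth-trials} is again what justifies restricting to smooth compactly supported trials.

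\textbf{Equality.} Suppose $\Omega$ is not an interval. The strategy is to use the fact that at least two components each have positive length, so the gluing can be improved quantitatively. I would argue through minimizers.

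\begin{itemize}
\item By Proposition~\ref{prop:attainment} and compactness in one dimension, a minimizer $u$ for $\Gamma_{p,q}(\Omega)$ exists. If it does not exist for $q=1$, I would use a relaxed BV version.
\item Glue $u$ as above onto an interval of length $|\Omega|$. The glued function lies in $W_0^{2,q}$ of that interval because $u=u'=0$ at the endpoints of each component (with $q=1$ handled via the relaxed space).
\item If equality held, the glued function would be a minimizer on the interval $\Omega^*$. However, it vanishes together with its derivative at an interior point, namely a junction between two glued pieces.
\item The hardest step is ruling this out: showing that minimizers on an interval cannot have an interior double zero. For $1<q<\infty$ the Euler--Lagrange equation is $(|w''|^{q-2}w'')''=\lambda|w|^{p-2}w$, a fourth-order ODE. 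I would argue that it forces a positive, symmetric, single-signed profile, for instance by an explicit description of minimizers, which Table~\ref{tab:interval-values} suggests is available.
\item A simpler alternative avoids the ODE. Take the glued minimizer $v$ and split $v=v_1+v_2$ with disjoint supports, the two pieces separated at the double zero. Replace $v$ by the better of two rescaled single pieces, each stretched over all of $\Omega^*$. By the scaling law \eqref{eq:scaling}, this yields a strict improvement whenever the exponent $1/q-1/p-2$ is negative, which it always is in one dimension.
\item Combining the norms of the two pieces requires a convexity argument, which I would carry out case by case for $p,q$ including the endpoints $1$ and $\infty$.
\end{itemize}
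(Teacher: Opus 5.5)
\textbf{Gap in the equality case.} Your inequality argument is exactly the paper's: restrict to $C_c^\infty(\Omega)$ via Lemma~\ref{lem:smooth-trials}, then translate the finitely many relevant components together. The equality part has a genuine gap.

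Your main route needs a minimizer on $\Omega$. At $q=1$ there is none, for any $p$, by Proposition~\ref{prop:attainment}(\ref{part:attainment-finite-endpoints}) and (\ref{part:attainment-one-dimensional-q1}). The ``relaxed BV version'' is never set up, and it would not fit your gluing scheme. Relaxed extremals at $q=1$ are tents (Appendix~\ref{app:interval-qone}), with nonzero slope at the component endpoints. So the glued function need not have an interior double zero, and the endpoint jumps of $u'$ must be charged to the relaxed energy. Without a minimizer, strict inequality for each individual trial function does not give strict inequality of the infima. The ODE route (minimizers on an interval are one-signed and symmetric) is a substantial unproved claim for general $p,q$, and the paper deliberately avoids needing it.

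\textbf{What is missing from your ``simpler alternative''.} This is the right idea, but the step you defer is the whole content. It is not a routine convexity check when $p<q$: merging pieces then genuinely lowers the quotient. Two equal pieces each with quotient $1$ combine to quotient $2^{1/q-1/p}<1$, so you must show that the scaling gain beats this loss.

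The paper does this with no minimizers and no gluing, for every trial function $u\in C_c^\infty(\Omega)$. Put $a_j=\norm{u}_{L^p(I_j)}$, $b_j=\norm{u''}_{L^q(I_j)}$, $\sigma=2+1/p-1/q$ and $C_{p,q}=\Gamma_{p,q}((0,1))$. Scaling on each component gives $C_{p,q}a_j\le L_j^\sigma b_j$.
\begin{itemize}
\item If $p\ge q$, use $\norm{(a_j)}_{\ell^p}\le\norm{(a_j)}_{\ell^q}$ to get $C_{p,q}\norm{u}_p\le L_{\max}^\sigma\norm{u''}_q$.
\item If $p<q$, apply H\"older with $1/s=1/p-1/q$ and use the identity $\sigma s=1+2s$ to get $C_{p,q}\norm{u}_p\le\bigl(\sum_jL_j^{1+2s}\bigr)^{1/s}\norm{u''}_q$.
\end{itemize}
With at least two components, both constants are strictly smaller than $L^\sigma$; for instance $\sum_jL_j^{1+2s}\le L_{\max}^{2s}L<L^{1+2s}$, even with countably many components. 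The resulting lower bound on $\Gamma_{p,q}(\Omega)$ is uniform in $u$ and strictly exceeds $C_{p,q}L^{-\sigma}=\Gamma_{p,q}(\Omega^*)$. This covers $q=1$ and $q=\infty$ uniformly.
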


The interval eigenvalue has a positive
minimizer $u$ that is symmetric about the midpoint and strictly decreasing
toward each endpoint, when $1\le p\le\infty$ and $1<q \leq \infty$. Every minimizer is a nonzero constant multiple
of $u$. We omit the proofs. For $1<p,q<\infty$, interval uniqueness also follows from
Benedikt~\cite[Theorem~8, Lemma~24 and Example~18]{Benedikt2004Simplicity}.
For $p=q>1$, Benedikt also proved that the first clamped
eigenfunction has one sign~\cite[Theorem~1.1]{Benedikt2004Discreteness};
midpoint symmetry then follows from reflection and simplicity.

\paragraph{Remark (explicit values for the interval).}
{
For an interval $I$ of length $L$, Table~\ref{tab:interval-values}
collects the four edge formulas, with proofs in
Appendix~\ref{app:interval-values}. Here $q'$ is the H\"older conjugate
exponent, including $1'=\infty$ and $\infty'=1$. We interpret
$(r+1)^{1/r}=1$ when $r=\infty$ and define constants $c_\infty=1$ and 
\[
 \begin{aligned}
 c_p & =2^{1+1/p}\left(\frac1{2p+1}
             +\int_0^1(2-t^2)^p\,dt\right)^{\!-1/p},
 && 1\le p<\infty ,\\
 m_r & =\left(\min_{0\le a\le1}\norm{t^2-a}_{L^r(0,1)}\right)^{\!-1},
 && 1\le r\le\infty.
 \end{aligned}
\]

\begin{table}[htbp]
\centering
\setlength{\abovecaptionskip}{8pt}
\begingroup
\renewcommand{\arraystretch}{1.6}
\begin{tabular}{@{}llcl@{}}
\toprule
Edge & Parameter range & $\Gamma_{p,q}(I)$ & Proof\\
\midrule
$p=\infty$ & $1\le q\le\infty$
 & $\displaystyle\frac{8(q'+1)^{1/q'}}{L^{2-1/q}}$
 & Appendix~\ref{app:interval-pinfty}\\
\addlinespace[4pt]
$q=1$ & $1\le p\le\infty$
 & $\displaystyle\frac{8(p+1)^{1/p}}{L^{1+1/p}}$
 & Appendix~\ref{app:interval-qone}\\
\addlinespace[4pt]
$q=\infty$ & $1\le p\le\infty$
 & $\displaystyle\frac{16c_p}{L^{2+1/p}}$
 & Appendix~\ref{app:interval-qinfty}\\
\addlinespace[4pt]
$p=1$ & $1\le q\le\infty$
 & $\displaystyle\frac{8m_{q'}}{L^{3-1/q}}$
 & Appendix~\ref{app:interval-pone}\\
\bottomrule
\end{tabular}
\endgroup
\par
\caption{Explicit edge values for an interval $I$ of length $L$. See discussion and references in the text.
\label{tab:interval-values}}
\end{table}

For $p=\infty$ and $1<q<\infty$, the formula was obtained by
Oshime~\cite[Theorem~14]{Oshime2008} and
reproved by Watanabe \emph{et al.}~\cite[Theorem~1.1]{WatanabeEtAl2009}.
Its endpoints give $\Gamma_{\infty,1}(I)=8/L$ and
$\Gamma_{\infty,\infty}(I)=16/L^2$.
For $q=\infty$, a minimizing function for every $p$ is the piecewise
quadratic profile of Katzourakis and
Parini~\cite[Remark~3.7]{KatzourakisParini2017}.
Parini--Ruf--Tarsi~\cite[Proposition~7.1]{PariniRufTarsi2015}
evaluated $\Gamma_{1,1}(I)$. 

For the $p=1$ edge, we adapt the interpolation argument underlying
the $q=2$ kernel proof of
Hindov--Nitzan--Olsen--Rydhe~\cite[Theorem~1, Sections~2.1--2.2]{HindovEtAl2025}.
Their value at $(p,q)=(1,2)$, lying halfway up the right side of the
parameter square, is $\Gamma_{1,2}(I)=12\sqrt{5}/L^{5/2}$. Hence with bending stiffness normalized to one, 
$\Gamma_{1,2}(I)^{-2}=L^5/720$ is the compliance of the clamped beam under uniform unit load. They also obtained higher order Sobolev embedding constants. 

For a final special case, a classical formula for the vibrating clamped beam says $\Gamma_{2,2}(I)=\beta_1^2/L^2$ where $\beta_1\approx4.73004$ is the least positive root of $\cos\beta_1\cosh\beta_1=1$.
}

\subsection*{Prior work on Rayleigh's conjecture}
\label{sec:literature}

The history of Rayleigh's conjecture centers around the difficulty of
sign-changing fundamental modes.
Szeg\H{o} proved the inequality under the additional assumption
that the fundamental mode has one sign~\cite{Szego1950,Szego1958}. Talenti
introduced the two-ball nodal domain rearrangement method to avoid that
assumption~\cite{Talenti1981}. Nadirashvili completed the planar
proof~\cite{Nadirashvili1995}, and Ashbaugh and Benguria established
the result in dimensions two and three~\cite{AshbaughBenguria1995}.
In dimensions four and higher, Ashbaugh and Laugesen obtained
quantitative lower bounds that approach the conjectured sharp
constant as the dimension increases~\cite{AshbaughLaugesen1996}.
Leylekian obtained sufficient conditions for Rayleigh's conjecture
in higher dimensions~\cite{Leylekian2024}.
Ashbaugh, Benguria and Laugesen~\cite{ABL1997} survey these plate
results and the related buckling problem.
For broader background, see Ashbaugh's survey~\cite{Ashbaugh2000}
on isoperimetric eigenvalue inequalities and the monograph of
Gazzola, Grunau and Sweers~\cite{GazzolaGrunauSweers2010} on
biharmonic problems and positivity.

Beyond Euclidean space, Krist\'aly proved sharp Rayleigh comparisons
in hyperbolic space for plates of sufficiently small volume in
dimensions two and three~\cite{Kristaly2020}, and for spherical
plates of arbitrary size in those dimensions or sufficiently large
size in higher dimensions~\cite{Kristaly2022}. Krist\'aly and Mondino obtained related
inequalities, with rigidity and stability results, on certain nonsmooth
spaces with synthetic nonnegative Ricci curvature~\cite{KristalyMondino2025}.
Chasman and Langford proved a nonsharp analogue in Gauss space for the
bi-Hermite operator~\cite{ChasmanLangford2016}.

\Needspace{22\baselineskip}
\subsection*{Positivity and attainment of $\Gamma_{p,q}$}\label{sec:attainment-statement}

The equality proofs use minimizing functions that are supplied by the next proposition, 
when $1<q\le\infty$. Figure~\ref{fig:attainment-range} summarizes the planar case.
Within the compact embedding range, the infimum at $q=1$
is positive but not attained.

{
\begin{proposition}[Attainment in the compact range]\label{prop:attainment}
Let $n\ge1$ and suppose $\Omega\subset\R^n$ is nonempty, bounded and open. If $1\le p,q\le\infty$ satisfy 
\[
 \frac1q<\frac1p+\frac2n
\]
then the following conclusions hold. 
\begin{enumerate}
\setlength{\itemsep}{2pt}
\setlength{\parsep}{0pt}
\renewcommand{\theenumi}{\alph{enumi}}
\renewcommand{\labelenumi}{(\theenumi)}
\item\label{part:attainment-compactness}
The embedding of $W_0^{2,q}(\Omega)$ into $L^p(\Omega)$ is compact  
when $q<\infty$, and when $q=\infty$ the embedding of $W_C^{2,\infty}(\Omega)$ (equipped with
the norm $\norm{\Delta u}_\infty$) into $L^p(\Omega)$ is compact.
Also, $\Gamma_{p,q}(\Omega)>0$.
\item\label{part:attainment-existence}
If $1<q\le\infty$ then the infimum for $\Gamma_{p,q}(\Omega)$ in \eqref{eq:gamma}
is attained by a nonzero function.
\item\label{part:attainment-finite-endpoints}
If $q=1$ and $1\le p<\infty$ then the infimum
is not attained.
\item\label{part:attainment-one-dimensional-q1}
If $n=1$, $p=\infty$ and $q=1$,
then the infimum is not attained.
\end{enumerate}
\end{proposition}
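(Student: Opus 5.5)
I would prove the four parts in order, with compactness doing most of the work.

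\emph{Part (a).} For $1\le q<\infty$ I would extend $u\in W_0^{2,q}(\Omega)$ by zero to a large ball $B\supset\Omega$. The condition $1/q<1/p+2/n$ is exactly the subcritical Rellich--Kondrachov range for $W^{2,q}\hookrightarrow L^p$ on a bounded set: when $2q<n$ it says $p<nq/(n-2q)$, and when $2q\ge n$ any $p\le\infty$ is allowed, with the borderline $2q=n$, $p=\infty$ excluded by the strict inequality. On $W_0^{2,q}$ the norm $\norm{\Delta u}_q$ must be equivalent to the full $W^{2,q}$ norm. For $1<q<\infty$ I would get this from Calder\'on--Zygmund estimates on $\R^n$ applied to the zero extension, which give $\norm{D^2u}_q\le C\norm{\Delta u}_q$, followed by Poincar\'e for the lower-order terms. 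For $q=1$ Calder\'on--Zygmund fails, so I would instead write $u=G*\Delta u$ with $G$ the Newtonian (or logarithmic) kernel. The Riesz potential of order $2$ maps $L^1$ compactly into $L^p(B)$ for $1/p>1-2/n$, which is again the stated condition; in the plane with $q=1$ it gives every $p<\infty$. The case $p=\infty$, $q=1$ is allowed only when $n=1$, where it is immediate. For $q=\infty$ I would use the Katzourakis--Parini space: $\norm{\Delta u}_\infty$ controls $\norm{u}_{W^{2,r}}$ for every $r<\infty$, so taking $r>n/2$ gives a compact embedding into $C(\overline B)$, hence into $L^p$. Positivity of $\Gamma_{p,q}$ then follows because compactness into $L^p$ makes the embedding bounded. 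The only care needed is that smooth compactly supported approximants justify the representation $u=G*\Delta u$.

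\emph{Part (b).} Take a minimizing sequence normalized by $\norm{u_k}_p=1$, so that $\norm{\Delta u_k}_q\to\Gamma_{p,q}(\Omega)$. For $1<q<\infty$ the sequence is bounded in the reflexive space $W_0^{2,q}$, so a subsequence converges weakly there and, by part (a), strongly in $L^p$. The limit then satisfies $\norm{u}_p=1$, and lower semicontinuity of the norm gives $\norm{\Delta u}_q\le\Gamma_{p,q}(\Omega)$, so $u$ attains the infimum. For $q=\infty$ I would use weak-$*$ compactness of $\Delta u_k$ in $L^\infty$ together with boundedness in each $W_0^{2,r}$. The limit stays in every $W_0^{2,r}$ because these spaces are weakly closed, and the sup norm is weak-$*$ lower semicontinuous.

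\emph{Parts (c) and (d).} This is where I expect the main obstacle. The idea is to suppose a minimizer $u$ exists with $q=1$ and derive a contradiction from the lack of reflexivity of $L^1$. If $u$ minimizes, then perturbing $\Delta u$ by concentrating its mass should strictly improve the quotient.

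For (d), in one dimension I would write $u(x)=\int G(x,y)u''(y)\,dy$, where $G$ is the clamped Green's function of the interval-type problem, and reduce to a question about point loads. Both the value $8/L$ and the minimizer are approximated by $u''$ tending to a measure of the form $\delta$-combinations, which is not in $L^1$. Concretely, since $\norm{u}_\infty\le\sup_{x,y}|G(x,y)|\,\norm{u''}_1$, equality would force $u''$ to be supported where $|G(x_0,\cdot)|$ attains its maximum. That is a null set, so $u''=0$, a contradiction.

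For (c) with $p<\infty$, I would use duality. Write $\norm{u}_p=\sup_g\int u\,g=\sup_g\int(\mathcal G g)\,\Delta u$, where $g$ ranges over the unit ball of $L^{p'}$ and $\mathcal G$ is the clamped solution operator for $\Delta^2$ composed with $\Delta$. Then $\norm{u}_p\le\norm{\Delta u}_1\sup_y|w(y)|$, where $w=\mathcal G g$ for the optimal $g$. Equality in this chain forces $\Delta u$ to concentrate on the set where $|w|$ attains its maximum. Since $u$ vanishes to second order on $\partial\Omega$, $w$ is a nonconstant function satisfying an elliptic equation, so that set has measure zero, which gives the contradiction. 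Making this step rigorous is the delicate point. It requires showing that the optimizing $g$ (essentially $|u|^{p-2}u$) exists, and that the corresponding $w$ cannot be constant on a set of positive measure; I would try to obtain the latter by unique continuation.
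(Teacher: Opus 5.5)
Your parts (a) and (b) are sound and essentially follow the paper. For $q=1$, treating the truncated Newtonian or logarithmic potential as a compact operator $L^1\to L^p$ is a valid substitute for the paper's Young-inequality bound.

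\textbf{The gap in (c).} The identity $\int_\Omega ug\,dx=\int_\Omega w\,\Delta u\,dx$ determines $w$ only modulo bounded harmonic functions, because $\int_\Omega h\,\Delta u\,dx=0$ for such $h$ and clamped $u$. The clamped representative $w=\Delta W$, $\Delta^2W=g$, is in general not the one of least sup norm, so minimality of $u$ does not force equality in $\norm{u}_p\le\norm{\Delta u}_1\norm{w}_\infty$. For instance, take $n=1$, $p=q=1$, $I=(0,L)$ and $g\equiv1$, which is $g_u$ for any positive $u$. Then $W=x^2(L-x)^2/24$ and $W''=(L^2-6Lx+6x^2)/12$, so $\norm{W''}_\infty=L^2/12$. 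By contrast, $1/\Gamma_{1,1}(I)=L^2/16$ is the sup norm of the shifted representative $W''-L^2/48$. So for a putative minimizer your chain is strict, and nothing concentrates. (On rough $\Omega$, $\Delta W$ need not even be bounded.)

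The missing idea is to choose the extremal representative by Hahn--Banach. The functional $-\Delta\phi\mapsto\gamma\int_\Omega g\phi\,dx$ on the subspace $\Delta(W_0^{2,1}(\Omega))\subset L^1$ has norm at most $1$. A norm-preserving extension gives $\xi\in L^\infty$ with $\norm{\xi}_\infty\le1$ and $-\Delta\xi=\gamma g$ distributionally. Testing with $\phi=u$ then forces $\xi=\operatorname{sgn}(-\Delta u)$ on $E=\{\Delta u\ne0\}$.

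Your endgame must also change. Unique continuation says nothing about the inhomogeneous equation $-\Delta\xi=\gamma g$, and a priori $\xi$ could be constant on a positive-measure set where $u$ vanishes. Instead, apply the level-set property of Sobolev functions twice: $D^2\xi=0$ a.e.\ on $\{\xi=\pm1\}$, so $g=0$ and hence $u=0$ a.e.\ on $E$. The same property for $u\in W^{2,1}$ then gives $\Delta u=0$ a.e.\ on $E$, a contradiction. No measure-zero claim about the maximum set is needed.

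\textbf{The same defect in (d).} If $G(x,\cdot)=W_x''$ with $W_x''''=\delta_x$ clamped (the bending moment of a clamped beam under a unit point load at $x$), then $\sup_{x,y}|G(x,y)|=4L/27$. This is attained at $x=L/3$, $y=0$, and exceeds $1/\Gamma_{\infty,1}(I)=L/8$. The Dirichlet kernel of $-d^2/dx^2$ is worse, with supremum $L/4$. So your bound $\norm{u}_\infty\le\sup|G|\,\norm{u''}_1$ is not sharp, and at an unknown maximum point $x_0\ne L/2$ equality is not forced. You also use the value $8/L$ without proof, and you do not treat disconnected $\Omega$.

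The paper avoids kernels. After a sign change, let $u(a)=\norm{u}_\infty=M$. Since $u'(0)=u'(a)=0$, one gets $M=-\int_0^a(x-a/2)u''\,dx<(a/2)\norm{u''}_{L^1(0,a)}$, strictly because $|x-a/2|<a/2$ a.e. Combining this with the analogous estimate on $(a,L)$ gives $\norm{u''}_1>(8/L)M$. Tent functions show that $8/L$ is the infimum. For general $\Omega$, restrict to the component on which $|u|$ attains its maximum.
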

}

Section~\ref{sec:attainment} proves all four parts of the proposition.

\begin{figure}[!t]
 \centering
 \includegraphics[scale=0.85]{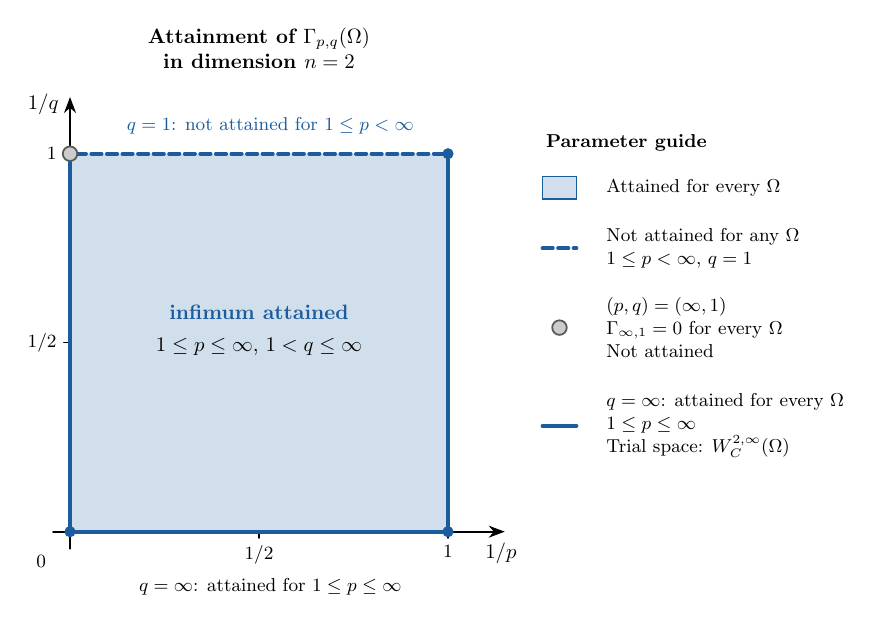}
 \vspace{-6pt}
 \caption{Dimension $n=2$. Attainment of the infimum defining $\Gamma_{p,q}(\Omega)$ for a nonempty bounded open set
 $\Omega\subset\R^2$, as justified by 
 Proposition~\ref{prop:attainment}(\ref{part:attainment-existence})(\ref{part:attainment-finite-endpoints}).
 The gray corner has an unattained zero infimum
 (Section~\ref{sec:q1-equality}).}
 \label{fig:attainment-range}
\end{figure}

\Needspace{10\baselineskip}
\section{Reduction to smooth trial functions}\label{sec:smooth-trials}

We reduce the inequality proofs to smooth compactly supported trial
functions. First we record two estimates used later for positivity
and attainment. The whole-space Calder\'on--Zygmund estimate
\cite[Chapter~9]{GilbargTrudinger2001} is
\begin{equation}\label{eq:whole-space-cz}
 \norm{D^2\phi}_{L^r(\R^n)}
 \le C_{n,r}\norm{\Delta\phi}_{L^r(\R^n)},
 \qquad \phi\in C_c^\infty(\R^n),\quad 1<r<\infty.
\end{equation}
For a nonempty bounded open set $\Omega\subset\R^n$ and
$u\in W_C^{2,\infty}(\Omega)$, zero extension, finite-exponent
density, Poincar\'e inequalities on a fixed enclosing ball and
H\"older's inequality give
\begin{equation}\label{eq:wc-bound}
 \norm{u}_{W^{2,r}(\Omega)}
 \le C_{\Omega,n,r}\norm{\Delta u}_\infty,\qquad 1<r<\infty.
\end{equation}

{
\begin{lemma}[Equivalent trial space]\label{lem:smooth-trials}
If $\Omega\subset\R^n, n\ge1$, is nonempty, bounded and open,
and $1\le p,q\le\infty$, then
\begin{equation}\label{eq:smooth-trials}
 \Gamma_{p,q}(\Omega)
 =\inf_{0\ne u\in C_c^\infty(\Omega)}
       \frac{\norm{\Delta u}_{L^q(\Omega)}}{\norm{u}_{L^p(\Omega)}}.
\end{equation}
Moreover, when $q=\infty$ every $u\in W_C^{2,\infty}(\Omega)$ has approximants
$u_j\in C_c^\infty(\Omega)$ such that
\begin{equation}\label{eq:smooth-approximation}
 u_j\longrightarrow u\quad\hbox{uniformly},\qquad
 \limsup_j\norm{\Delta u_j}_\infty\le\norm{\Delta u}_\infty.
\end{equation}
\end{lemma}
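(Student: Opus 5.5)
The inequality ``$\le$'' in \eqref{eq:smooth-trials} is immediate, because $C_c^\infty(\Omega)$ is contained in $W_0^{2,q}(\Omega)\cap L^p(\Omega)$ for $q<\infty$ and in $W_C^{2,\infty}(\Omega)$ for $q=\infty$. For ``$\ge$'' it suffices to show the following: given any admissible $u\ne0$, there are $u_j\in C_c^\infty(\Omega)$ with $\norm{u_j}_p\to\norm{u}_p$ (or at least $\liminf\norm{u_j}_p\ge\norm{u}_p$) and $\limsup_j\norm{\Delta u_j}_q\le\norm{\Delta u}_q$. The proof splits into the cases $q<\infty$ and $q=\infty$.

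\emph{Case $1\le q<\infty$.} By definition of $W_0^{2,q}(\Omega)$ there are $u_j\in C_c^\infty(\Omega)$ with $u_j\to u$ in $W^{2,q}$, so $\norm{\Delta u_j}_q\to\norm{\Delta u}_q$. It remains to control the denominator. In the compact or subcritical embedding range, $W^{2,q}_0\hookrightarrow L^p$ continuously (via zero extension and Sobolev embedding on an enclosing ball), so $u_j\to u$ in $L^p$ and we are done. Outside that range, the finiteness of $\norm{u}_p$ is an extra hypothesis and convergence in $L^p$ is not automatic. Here I would pass to a subsequence with $u_j\to u$ a.e. and apply Fatou's lemma, which gives $\liminf\norm{u_j}_p\ge\norm{u}_p$. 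This is exactly the needed direction, since a lower bound on the denominator suffices. (For $p=\infty$ Fatou is replaced by $\norm{u}_\infty\le\liminf\norm{u_j}_\infty$ under a.e. convergence, which also holds.) So in every subcase the quotients satisfy $\limsup \norm{\Delta u_j}_q/\norm{u_j}_p\le\norm{\Delta u}_q/\norm{u}_p$.

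\emph{Case $q=\infty$.} This is the main obstacle, since smooth functions are not dense in the $L^\infty$ norm of the Laplacian. I would prove \eqref{eq:smooth-approximation} directly, and it then yields the infimum identity. Fix $u\in W_C^{2,\infty}(\Omega)$ and $r>n/2$ large. By \eqref{eq:wc-bound} and Sobolev embedding, $u$ is continuous and vanishes on the boundary in the sense of the $W_0^{2,r}$ closure. My first attempt would be to take $\phi_k\in C_c^\infty(\Omega)$ with $\phi_k\to u$ in $W^{2,r}$, so that $\phi_k\to u$ uniformly. However, $\norm{\Delta\phi_k}_\infty$ is not controlled, so I would instead work at the level of the source. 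Set $f=\Delta u\in L^\infty$. Choose compact sets $K_j\uparrow\Omega$ and mollify the truncated source, $f_j=(f\chi_{K_j})*\rho_{\varepsilon_j}$, so that $\norm{f_j}_\infty\le\norm{f}_\infty$. Solving with Dirichlet data does not give compact support, so this alone is insufficient.

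A cleaner route is to regularize $u$ itself. Extend $u$ by zero to $\R^n$. Membership in $W_0^{2,r}$ for all $r$ gives $\Delta(u\chi_\Omega)=f\chi_\Omega$ in $\mathcal D'(\R^n)$. Next cut off near the boundary using $u\,\eta_\delta$, where $\eta_\delta$ is a cutoff equal to $1$ away from a $\delta$-neighborhood of $\partial\Omega$ and satisfies $|\nabla\eta_\delta|\lesssim1/\delta$ and $|D^2\eta_\delta|\lesssim\delta^{-2}$. Then
\[
\Delta(u\eta_\delta)=\eta_\delta f+2\nabla u\cdot\nabla\eta_\delta+u\Delta\eta_\delta .
\]
The extra terms need $|u|\lesssim\delta^{2}$ and $|\nabla u|\lesssim\delta$ in the boundary layer. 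This comes from $u\in W^{2,r}_0$ for all $r$, which gives $C^{1,\alpha}$ regularity with zero trace for $u$ and $\nabla u$; but that yields only $|\nabla u|\lesssim \delta^\alpha$, which falls short. This is the step I expect to be genuinely hard for arbitrary open sets. My fallback is to avoid cutoffs. Approximate $u$ by $u_j=(u-\varepsilon_j)_+$-type compactly supported modifications, using that $u$ is continuous and zero at the boundary. More promisingly, solve obstacle-free problems: take $v_j$ solving $\Delta v_j=f_j$ on smooth $\Omega_j\Subset\Omega$ with $\Omega_j\uparrow\Omega$ and $v_j,\nabla v_j$ matched, and use the $W^{2,r}$ estimate \eqref{eq:wc-bound} uniformly in $j$ to get uniform convergence. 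Finally, mollify the resulting compactly supported function. Mollification preserves $\norm{\Delta\cdot}_\infty$ bounds and uniform convergence, which gives \eqref{eq:smooth-approximation}.
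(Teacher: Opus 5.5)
Your handling of the inclusion direction and of $1\le q<\infty$ is correct and is exactly the paper's argument: approximation in $W^{2,q}$, an a.e.\ convergent subsequence, then Fatou or lower semicontinuity of the essential supremum. The gap is the case $q=\infty$, where none of your routes produces \eqref{eq:smooth-approximation}. The single-scale cutoff $u\eta_\delta$ fails for two reasons, and you identified only the first.

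\emph{Boundary decay.} The decay you extract from $C^{1,\alpha}$ is too weak, but much better decay is available. The zero extension of $u$ lies in $W^{2,r}(\R^n)$ with $|\Delta u|\le A=\norm{\Delta u}_\infty$ on all of $\R^n$, so $\nabla u$ is log-Lipschitz with constant comparable to $A$. Since $u=\nabla u=0$ at every point of $\Omega^c$, this gives $|\nabla u|\le CA\,d\Lambda$ and $|u|\le CA\,d^2\Lambda$, where $d$ is the distance to $\Omega^c$ and $\Lambda=1+\log(D/d)$.

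\emph{Single scale.} Even the bounds $|u|\lesssim\delta^2$ and $|\nabla u|\lesssim\delta$ that you ask for only make $2\nabla u\cdot\nabla\eta_\delta+u\Delta\eta_\delta$ of size $O(A)$, not $o(1)$. That gives $\limsup_j\norm{\Delta u_j}_\infty\le C\norm{\Delta u}_\infty$ with some $C>1$, not the sharp inequality. The loss is real. In one dimension, take $u=x^2/2$ near the endpoint $0$, as for the Katzourakis--Parini profile, with the cutoff rising from $0$ to $1$ on $[\delta,\lambda\delta]$. Then $\norm{(\eta_\delta u)''}_\infty\ge\lambda^2/(\lambda-1)^2>1$ for every fixed $\lambda$.

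The missing idea is a cutoff spread over unboundedly many scales that also absorbs the logarithm. The paper uses $\chi_j=\eta(\log\Lambda/j)$, with a regularized distance satisfying $|\nabla d|\le C$ and $|D^2d|\le C/d$. This gives $|\nabla\chi_j|\le C/(jd\Lambda)$ and $|\Delta\chi_j|\le C/(jd^2\Lambda)$. The product rule then yields $|\Delta(\chi_ju)-\chi_j\Delta u|\le CA/j$, and mollification finishes.

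Your fallbacks do not close the gap either.
\begin{itemize}
\item Truncations such as $(u-\varepsilon)_+$, or a two-sided shrinkage since $u$ may change sign, have a gradient jump across $\{|u|=\varepsilon\}$. Their Laplacian therefore has a singular part on that level set. Smoothing the truncation to some $\sigma_\varepsilon$ at scale $\varepsilon$ produces the term $\sigma_\varepsilon''(u)|\nabla u|^2\approx|\nabla u|^2/\varepsilon$, which blows up near interior zeros of $u$ where $\nabla u\ne0$.
\item Prescribing both $v_j$ and $\nabla v_j$ on $\partial\Omega_j$ for the second-order equation $\Delta v_j=f_j$ is an overdetermined Cauchy problem. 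For the zero data that a compactly supported extension with bounded Laplacian requires, it is solvable only if $f_j$ is orthogonal to every harmonic function smooth up to $\partial\Omega_j$. With Dirichlet data alone, the zero extension acquires a single layer on $\partial\Omega_j$.
\item \eqref{eq:wc-bound} bounds a clamped function by its own Laplacian. It controls $v_j-u$ only if $v_j-u$ is itself clamped with small Laplacian, which is the original problem.
\end{itemize}
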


\begin{proof}
The ``$\leq$'' direction of \eqref{eq:smooth-trials} follows by inclusion of $C_c^\infty(\Omega)$ into the trial space.

For the reverse direction, first suppose $1\le q<\infty$. Take
$0\ne u\in W_0^{2,q}(\Omega)\cap L^p(\Omega)$ and choose
$u_j\in C_c^\infty(\Omega)$ converging to $u$ in $W^{2,q}$ and
almost everywhere. Fatou's lemma, or lower semicontinuity of the
essential supremum when $p=\infty$, gives
\[
 0<\norm{u}_p\le\liminf_j\norm{u_j}_p,
 \qquad
 \norm{\Delta u_j}_q\longrightarrow\norm{\Delta u}_q .
\]
Consequently, $\norm{\Delta u}_q/\norm{u}_p$ is greater than or equal to the right side of \eqref{eq:smooth-trials}. Taking the infimum over $u$ proves the ``$\geq$'' direction of \eqref{eq:smooth-trials}. 

For $q=\infty$, it suffices to prove \eqref{eq:smooth-approximation},
since uniform convergence gives convergence in every $L^p$ norm.
A related smooth approximation is constructed by Katzourakis and
Moser~\cite[proof of Lemma~2.2]{KatzourakisMoser2023} under a Lipschitz
assumption on $\Omega$; the proof below requires no boundary regularity.
Let $0\ne u\in W_C^{2,\infty}(\Omega)$ and put
$A=\norm{\Delta u}_\infty>0$.
The zero extension, still
denoted $u$, lies in $W^{2,r}(\R^n)$ with $\Delta u=0$ almost
everywhere on $\Omega^c$, for every $1<r<\infty$.
Choosing $r>n$, Sobolev embedding gives
$u\in C^1(\R^n)$ with $u=\nabla u=0$ at every point of $\Omega^c$.

We first estimate the boundary decay of $u$.
Choose a smooth regularized distance $d\in C^\infty(\Omega)$,
comparable to $\operatorname{dist}(x,\Omega^c)$ and satisfying
$|\nabla d|\le C$ and $|D^2d|\le C/d$; this construction requires
no boundary regularity~\cite[Lemma~1.1 and Corollary~1.2]{Lieberman1985}.
Fix $D>4\operatorname{diam}\Omega$ large enough that $d<D/e$
and put $\Lambda(x)=1+\log(D/d(x))$. 
For $n\ge2$, choose $x_0\in\Omega$ and consider the zero
extension on $B_D(x_0)$. Comparing $u$ and $-u$ with the torsion function 
$A(D^2-|x-x_0|^2)/(2n)$ on that ball gives $\norm{u}_\infty\le AD^2/(2n)$.
Since $\overline\Omega\subset B_{D/2}(x_0)$, the logarithmic
gradient estimate~\cite[Theorem~3.9, p.~41]{GilbargTrudinger2001}
(also valid for weak solutions with bounded right-hand side) can be applied
here to the zero extension on the larger ball $B_D(x_0)$, yielding
\[
 |\nabla u(x)-\nabla u(y)|
 \le CA|x-y|\bigl(1+\log(D/|x-y|)\bigr),
 \qquad x,y\in\overline\Omega,\quad x\ne y.
\]
Fix $x\in\Omega$, choose $y\in\Omega^c$ nearest to $x$ and put
$a=|x-y|$. Since $\nabla u(y)=0$, this estimate gives
$|\nabla u(x)|\le CAa(1+\log(D/a))$.
In dimension one, $|u''|\le A$ gives the stronger bound
$|u'(x)|\le Aa$.
Using also $u(y)=0$ and integrating along the segment from $y$ to $x$ gives
$|u(x)|\le CAa^2(1+\log(D/a))$. Since $a$ is comparable to $d(x)$,
we obtain
\[
|\nabla u(x)|\le CA\,d(x)\Lambda(x),\qquad
|u(x)|\le CA\,d(x)^2\Lambda(x).
\]

Next we cut off $u$ near the boundary. We use $\log\Lambda$ because
$\nabla(\log\Lambda)=-\nabla d/(d\Lambda)$ supplies the denominator
needed to cancel the $\Lambda$ in the boundary bounds.
Let $\eta:\R\to[0,1]$ be smooth, equal to one for $s\le1$
and zero for $s\ge2$, and set
\[
 \chi_j=\eta\!\left(\frac{\log\Lambda}{j}\right),\qquad j=1,2,\ldots.
\]
These cutoffs vanish where $\Lambda\ge e^{2j}$, hence in a neighborhood
of $\Omega^c$, and so belong to $C_c^\infty(\Omega)$.
Differentiating twice and using the bounds on $d$ and that $j,\Lambda \geq 1$ gives
\[
 |\nabla\chi_j|\le\frac{C}{jd\Lambda},\qquad
 |\Delta\chi_j|\le\frac{C}{jd^2\Lambda}.
\]
The product rule and the boundary bounds now give
\[
 |\Delta(\chi_ju)-\chi_j\Delta u|
 \le2|\nabla\chi_j||\nabla u|+|u||\Delta\chi_j|
 \le\frac{CA}{j}.
\]
Thus $\norm{\Delta(\chi_ju)}_\infty\le \norm{\Delta u}_\infty+CA/j$.
The set where $\chi_j\ne1$ lies in $\{\Lambda>e^j\}$, a layer that 
shrinks to $\Omega^c$. Since $u$ is continuous and vanishes there,
$\chi_ju\to u$ uniformly.
Finally, mollify $\chi_ju$ inside $\Omega$ with uniform error at most $1/j$, calling the
resulting function $u_j\in C_c^\infty(\Omega)$.
Mollification does not increase the $L^\infty$ norm of the Laplacian, and so these
functions satisfy \eqref{eq:smooth-approximation}.
\end{proof}
}

\section{Planar proof --- inequality in Theorem~\ref{thm:disk}}
\label{sec:planarproof}

Paired comparison~\cite{BL2026} decreases the quotient when we
pass from $\Omega$ to $\Omega^*$. Lemma~\ref{lem:smooth-trials}
allows us to start with smooth compactly supported trial functions. We set up the radial competitors
in $\R^n$ for use also in Theorem~\ref{thm:nd-infty}, then apply
the planar comparison. Equality cases are treated in
Section~\ref{sec:equality}.

Let $n\ge1$, let $\Omega\subset\R^n$ be nonempty, bounded and open,
and write $B=\Omega^*$. When we come to it, we will indicate the part of the proof that requires $n=2$. 

For an integrable source $f$ on $\Omega$, write $f^\downarrow$
for its signed decreasing rearrangement on $(0,1)$, equimeasurable
with $f$ when $\Omega$ is given normalized measure $|\Omega|^{-1}dx$.
On the ball $B$ with radius $R$,
the signed spherical rearrangement of $f$ is
$f^*(x)=f^\downarrow((|x|/R)^n)$.
These rearrangements preserve negative as well as positive values.

Fix $1\le p,q\le\infty$. We first take
$0\ne u\in C_c^\infty(\Omega)$ and set $f=-\Delta u$.
Then $f$ is bounded and
\[
 \int_\Omega f\,dx=-\int_\Omega\Delta u\,dx=0.
\]
Let $v,w$ be the radial Dirichlet solutions on $B$:
\begin{equation}\label{eq:ballproblems}
 \begin{cases}
 -\Delta v=f^*&\text{in }B,\\
 v=0&\text{on }\partial B,
 \end{cases}
 \qquad
 \begin{cases}
 -\Delta w=(-f)^*&\text{in }B,\\
 w=0&\text{on }\partial B.
 \end{cases}
\end{equation}
Since the sources in these equations are bounded, the maximum principle gives
$v,w\in L^\infty(B)$, hence $v,w\in L^p(B)$ for every
$1\le p\le\infty$.

The central step of the proof is now that our paired comparison result \cite[Theorem~1]{BL2026} applies  to this situation, giving when $n=2$ that either $v$ or $w$ has greater or equal $L^p$ norm than $u$:
\[
 \norm{u}_{L^p(\Omega)}
 \le M:=\max\{\norm{v}_{L^p(B)},\norm{w}_{L^p(B)}\} .
\]
Choose $z\in\{v,w\}$ that achieves the maximum, so that $\norm{z}_{L^p(B)}=M>0$.

The next two steps work in every dimension, provided $z$ has been obtained.

\subsubsection*{The case $1 \leq q < \infty$}

We must verify $z\in W_0^{2,q}(B)$ so that $z$ may be substituted as a trial function into the variational definition of $\Gamma_{p,q}(B)$. 

Let $\varphi\in L^r(B)$ be radial with
mean zero, for some $1<r<\infty$. The Dirichlet solution of  $-\Delta \zeta = \varphi$
belongs to $W^{2,r}(B)$ by~\cite[Theorem~9.15]{GilbargTrudinger2001}.
It satisfies 
\begin{equation}\label{eq:radial-flux}
 \zeta'(\varrho)=-\varrho^{1-n}\int_0^\varrho s^{n-1}\varphi(s)\,ds.
\end{equation}
Thus $\zeta$ is $C^1$ away from the origin, up to the boundary,
and has the clamped conditions $\zeta(R)=\zeta'(R)=0$. The trace characterization in Grisvard 
\cite[Corollary~1.5.1.6, p.~39]{Grisvard2011} yields 
$\zeta\in W_0^{2,r}(B)$.

Applying this fact to $\varphi=f^*$ and $(-f)^*$ gives that 
$z\in W_0^{2,q}(B)$ for every $1< q<\infty$, with $q=1$
following also by inclusion. Signed equimeasurability gives that $\norm{\Delta z}_{L^q(B)}
 =\norm{f}_{L^q(\Omega)} =\norm{\Delta u}_{L^q(\Omega)}$ 
and hence
\begin{align}
 \Gamma_{p,q}(B)
 &\le\frac{\norm{\Delta z}_{L^q(B)}}{\norm{z}_{L^p(B)}}
  =\frac{\norm{\Delta u}_{L^q(\Omega)}}{M}
  \le\frac{\norm{\Delta u}_{L^q(\Omega)}}
           {\norm{u}_{L^p(\Omega)}}.
 \label{eq:finiteq}
\end{align}
Taking the infimum over $u$ and recalling Lemma \ref{lem:smooth-trials} shows $\Gamma_{p,q}(B) \leq \Gamma_{p,q}(
\Omega)$.

\subsubsection*{The case $q=\infty$}

{
The preceding argument gives $z\in W_0^{2,r}(B)$
for every finite $r>1$. Since $\Delta z$ is bounded, we get 
$z\in W_C^{2,\infty}(B)$, and $z$ is an admissible trial function.
Signed equimeasurability therefore gives
\begin{align}
 \Gamma_{p,\infty}(B)
 &\le\frac{\norm{\Delta z}_{L^\infty(B)}}{\norm{z}_{L^p(B)}}
  =\frac{\norm{f}_{L^\infty(\Omega)}}{M}
  \le\frac{\norm{\Delta u}_{L^\infty(\Omega)}}
           {\norm{u}_{L^p(\Omega)}}.
 \label{eq:infiniteq}
\end{align}
Again, taking the infimum over $u$ and using Lemma \ref{lem:smooth-trials} proves \eqref{eq:main}.
}

\subsubsection*{Zero corner point $(p,q)=(\infty,1)$}

A logarithmic cutoff in two dimensions shows that $\Gamma_{\infty,1}(\Omega)=0$,
as follows. Choose $x_0\in\Omega$ and $R>0$ such that the closed ball 
$\overline{B_R(x_0)}$ is contained in $\Omega$. Fix a smooth function
$\eta:\R\to[0,1]$ with $\eta(t)=0$ for $t\le0$ and
$\eta(t)=1$ for $t\ge1$. For $0<\varepsilon<R$, write
$\tau_\varepsilon=\log(R/\varepsilon)$ and define
\[
 \chi_\varepsilon(x)=
 \eta\!\left(\frac{\log(R/|x-x_0|)}{\tau_\varepsilon}\right) , 
 \qquad \chi_\varepsilon(x_0)=1.
\]
Then $\chi_\varepsilon\in C_c^\infty(\Omega)$, equals $1$ on
$B_\varepsilon(x_0)$ and has $L^\infty$ norm $1$. With
$\varrho=|x-x_0|$ and $t=\log(R/\varrho)/\tau_\varepsilon$, the radial Laplacian is
\[
 \Delta \chi_\varepsilon(\varrho)
 =\frac{\eta''(t)}{\tau_\varepsilon^2\varrho^2} , 
 \qquad \varrho \in (\varepsilon,R),
\]
and vanishes outside this annulus. Consequently,
\[
 \frac{\norm{\Delta \chi_\varepsilon}_{L^1(\Omega)}}
      {\norm{\chi_\varepsilon}_{L^\infty(\Omega)}}
 =\frac{2\pi}{\tau_\varepsilon}
   \int_0^1|\eta''(t)|\,dt\longrightarrow0
\]
as $\varepsilon\to0$, proving the claim.
\qed

\hypertarget{remark:compliance}{}
\paragraph{Remark (connection with plate compliance).}

The case $(p,q)=(1,2)$ describes maximal plate compliance under
loads bounded by $1$, as we explain. For a bounded load 
$P\in L^\infty(\Omega)$, let $U_P\in H_0^2(\Omega)$ be the weak
solution of $\Delta^2U_P=P$, and put
$\mathcal C_{\rm plate}(\Omega,P)=\int_\Omega P U_P\,dx$.
Then 
\begin{equation}\label{eq:duality}
 \Gamma_{1,2}(\Omega)^{-2}
 =\sup_{\norm{P}_\infty\le1}
     \mathcal C_{\rm plate}(\Omega,P)
\end{equation}
by writing $\norm{u}_1=\sup_{\norm{P}_\infty\le1}
\int_\Omega P u\,dx$ in the quotient for $\Gamma_{1,2}(\Omega)^{-2}$ and interchanging
the two suprema.
Theorem~5.1 of Ashbaugh, Bucur, Laugesen and
Leylekian~\cite{ABLL2025} shows, in the plane, that every load
$|P|\le1$ satisfies
\[
 \mathcal C_{\rm plate}(\Omega,P)
 \le\mathcal C_{\rm plate}(\Omega^*,1).
\]
The constant load $1$ attains the disk supremum in
\eqref{eq:duality}, and so their theorem is equivalent to
\eqref{eq:main} at $(p,q)=(1,2)$.

\section{Higher dimensional inequalities and parameter ranges}
\label{sec:all-dimensions}

We prove the inequality for Theorem~\ref{thm:nd-infty} and determine
$\Gamma_{p,q}$ outside the compact embedding range shown in
Figure~\ref{fig:three-range}. Equality cases follow later in
Section~\ref{sec:nd-equality}.

\subsection*{Scaling and parameter ranges}

Let $n\ge3$ and let $\Omega\subset\R^n$ be nonempty, bounded and open.
In the $(1/p,1/q)$ square, the scaling law
\eqref{eq:scaling} gives the critical line
\begin{equation}\label{eq:ndline}
 \frac1q=\frac1p+\frac2n
\end{equation}
which runs from $(0,2/n)$ to $(1-2/n,1)$.
Below this line, where $1/q<1/p+2/n$, the trial spaces embed
compactly into $L^p$, as proved in Section~\ref{sec:attainment}. 
Proposition~\ref{prop:nd-noncompact} below treats this line and the region
above it.

\subsection*{Inequality in Theorem~\ref{thm:nd-infty} when $p=\infty$}

Take $0\ne u\in C_c^\infty(\Omega)$, set $f=-\Delta u$ and let
$v,w$ solve the rearranged equations in \eqref{eq:ballproblems} on the ball $B=\Omega^*$.
The key point is that our paired $L^\infty$ comparison \cite[Theorem~2]{BL2026} applies to precisely this set-up in all dimensions, giving that either $v$ or $w$ has greater or equal $L^\infty$ norm than $u$:
\[
 \norm{u}_{L^\infty(\Omega)}
 \le M:=\max\{\norm{v}_{L^\infty(B)},\norm{w}_{L^\infty(B)}\}.
\]
The clamped competitors from
\eqref{eq:finiteq}--\eqref{eq:infiniteq} therefore give
\begin{equation} \label{eq:rayleighCc}
 \Gamma_{\infty,q}(B)
 \le\frac{\norm{\Delta u}_{L^q(\Omega)}}{\norm{u}_{L^\infty(\Omega)}},
 \qquad 0\ne u\in C_c^\infty(\Omega).
\end{equation}
Taking the infimum over $C_c^\infty(\Omega)$ in
\eqref{eq:rayleighCc} and using Lemma~\ref{lem:smooth-trials}
proves \eqref{eq:nd-infty}.
\qed

\subsection*{Critical and supercritical exponents}

\begin{proposition}[Shape independence outside the compact range]
\label{prop:nd-noncompact}
Let $n\ge3$ and take $\Omega\subset\R^n$ to be nonempty, bounded
and open, with $1\le p,q\le\infty$. If $1/q>1/p+2/n$, or if
$(p,q)$ is an endpoint $(\infty,n/2)$ or $(n/(n-2),1)$ of
the critical segment \eqref{eq:ndline}, then $\Gamma_{p,q}(\Omega)=0$.
On the open critical segment, where $1/q=1/p+2/n$ and
$1<q<n/2$, the value is positive and independent of $\Omega$:
\[
 \Gamma_{p,q}(\Omega)
 =S_{n,q}:=
 \inf_{0\ne\phi\in C_c^\infty(\R^n)}
 \frac{\norm{\Delta\phi}_{L^q(\R^n)}}
      {\norm{\phi}_{L^p(\R^n)}}>0.
\]
\end{proposition}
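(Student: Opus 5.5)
The argument is by scaling and concentration, using Lemma~\ref{lem:smooth-trials} throughout so that only $\phi\in C_c^\infty(\Omega)$ need be considered. The basic observation is monotonicity under inclusion together with translation and dilation invariance of the ratio up to the factor in \eqref{eq:scaling}. Fix $\phi\in C_c^\infty(\R^n)$, $\phi\ne0$, and set $\phi_t(x)=\phi((x-x_0)/t)$ with $x_0\in\Omega$. Then
\[
\frac{\norm{\Delta\phi_t}_q}{\norm{\phi_t}_p}=t^{n(1/q-1/p)-2}\,\frac{\norm{\Delta\phi}_q}{\norm{\phi}_p}.
\]
For small $t$, $\phi_t\in C_c^\infty(\Omega)$. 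Hence $\Gamma_{p,q}(\Omega)\le t^{\alpha}\norm{\Delta\phi}_q/\norm{\phi}_p$ with $\alpha=n(1/q-1/p)-2$.

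\textbf{Supercritical region.} If $1/q>1/p+2/n$ then $\alpha>0$, and letting $t\to0$ gives $\Gamma_{p,q}(\Omega)=0$.

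\textbf{Open critical segment.} Here $\alpha=0$, so the same computation gives $\Gamma_{p,q}(\Omega)\le S_{n,q}$. For the reverse inequality, extend any $u\in C_c^\infty(\Omega)$ by zero to obtain a competitor for $S_{n,q}$, so $\Gamma_{p,q}(\Omega)\ge S_{n,q}$. It remains to show $S_{n,q}>0$. I would use the Calder\'on--Zygmund estimate \eqref{eq:whole-space-cz} with $r=q\in(1,n/2)$, giving $\norm{D^2\phi}_q\le C\norm{\Delta\phi}_q$. Applying Sobolev's inequality twice, $\norm{\nabla\phi}_{q^*}\le C\norm{D^2\phi}_q$ with $1/q^*=1/q-1/n$, and then $\norm{\phi}_{p}\le C\norm{\nabla\phi}_{q^*}$ with $1/p=1/q^*-1/n=1/q-2/n$. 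Both steps are valid since $q<n/2$ gives $q^*<n$, and $q^*>1$.

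\textbf{Endpoints.} These are the main obstacle, since scaling alone gives only $\Gamma\le S$ and we need $S=0$ there.

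At $(p,q)=(\infty,n/2)$, I would use a log-log cutoff in the spirit of the zero corner point in Section~\ref{sec:planarproof}. Take $\phi_\varepsilon(x)=\eta\bigl(\log\log(1/|x|)/\tau_\varepsilon\bigr)$, suitably truncated near $0$ and centered in $\Omega$. It equals $1$ near the center, so $\norm{\phi_\varepsilon}_\infty=1$. The Laplacian is of size $1/(\tau_\varepsilon\varrho^2\log(1/\varrho))$, plus similar terms, on the annulus $\log\log(1/\varrho)\in(\tau_\varepsilon,2\tau_\varepsilon)$. Its $L^{n/2}$ norm to the power $n/2$ is then about $\tau_\varepsilon^{-n/2}\int d\varrho/(\varrho\log^{n/2}(1/\varrho))$. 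This is bounded by $C\tau_\varepsilon^{-n/2}$ because $n/2>1$. Hence the quotient tends to $0$. The simpler cutoff $\eta(\log(R/\varrho)/\tau)$ is tried first, but it gives $\norm{\Delta}_{n/2}^{n/2}\sim\tau^{1-n/2}$, which also tends to $0$ for $n\ge3$, so it suffices.

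At $(p,q)=(n/(n-2),1)$, I would use mollifications of the fundamental solution to show that $L^1$ control of $\Delta\phi$ cannot bound $\norm{\phi}_{n/(n-2)}$, which is the failure of weak type at the endpoint. Take $\phi=\Phi*\rho_\delta$, cut off smoothly at scale $R$, where $\Phi$ is the Newtonian potential. Then $\norm{\Delta\phi}_1$ stays bounded, while $\int|\phi|^{n/(n-2)}\gtrsim\int_\delta^R\varrho^{-n}\varrho^{n-1}\,d\varrho=\log(R/\delta)\to\infty$. The cutoff contributes only a bounded amount to $\norm{\Delta\phi}_1$, since it contributes $O(R^{-n}R^n)$, and a translate-and-shrink placement puts everything inside $\Omega$ using scale invariance. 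This yields $\Gamma=0$. The delicate point is checking the cutoff terms carefully, and that is where I expect the most bookkeeping.
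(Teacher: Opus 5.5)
Your proof is correct. For the supercritical region, the open critical segment and the endpoint $(\infty,n/2)$ it is essentially the paper's argument. The paper quotes the second-order Sobolev inequality rather than iterating first-order Sobolev. It also uses exactly the plain logarithmic cutoff, with $\norm{\Delta\chi_\tau}_{n/2}\le C\tau^{2/n-1}$, so your log-log variant is not needed.

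The genuine difference is at $(p,q)=(n/(n-2),1)$. The paper argues by contradiction. If $\Gamma_{p,1}(\Omega)>0$, the bound $\norm{\phi}_p\le C\norm{\Delta\phi}_1$ extends to the completion of $C_c^\infty(\Omega)$ in the Laplacian norm. Fontana--Morpurgo's optimal target theorem would then force $L^{p,\infty}(\Omega)\subset L^p(\Omega)$, which fails for $|x-x_0|^{2-n}\mathbf 1_{B_R(x_0)}$.

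You instead exhibit an explicit sequence $\phi=\zeta_R\,(\Phi*\rho_\delta)$, and your bookkeeping is right: $\norm{\Delta\phi}_1\le\norm{\rho_\delta}_1+O(R^{-1}R^{1-n}R^n)+O(R^{-2}R^{2-n}R^n)=O(1)$, while $\norm{\phi}_{n/(n-2)}^{n/(n-2)}\gtrsim\log(R/\delta)$. Choosing $\rho_\delta$ radial makes $\Phi*\rho_\delta=\Phi$ exactly outside $B_\delta$ by Newton's theorem. Fixing $R$ with $B_{2R}(x_0)\subset\Omega$ and letting $\delta\to0$ removes even the rescaling step.

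Both proofs rest on the same fact: the Newtonian potential lies in weak $L^{n/(n-2)}$ but not in $L^{n/(n-2)}$. Yours is elementary and self-contained. The paper's is shorter on the page, at the price of a sharp optimal-target theorem plus the completion and Fatou-property technicalities.
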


\begin{proof}
For $1/q>1/p+2/n$, shrink a nonzero smooth bump into a ball
contained in $\Omega$. Its quotient scales as
$t^{n(1/q-1/p)-2}$ by \eqref{eq:scaling}; the exponent is
positive, and so the quotient tends to zero. Hence $\Gamma_{p,q}(\Omega)=0$.

On the open critical segment, where $1<q<n/2$ and
$p=nq/(n-2q)$, the whole-space Sobolev inequality
\cite[Theorem~2.3, p.~32]{GazzolaGrunauSweers2010} and
the Calder\'on--Zygmund estimate \eqref{eq:whole-space-cz} give, for every
$\phi\in C_c^\infty(\mathbb{R}^n)$,
\[
 \norm{\phi}_{L^p(\mathbb{R}^n)}
 \le C_{n,q}\norm{D^2\phi}_{L^q(\mathbb{R}^n)}
 \le C'_{n,q}\norm{\Delta\phi}_{L^q(\mathbb{R}^n)}.
\]
Hence $S_{n,q}\ge 1/C'_{n,q}>0$.
By Lemma~\ref{lem:smooth-trials}, $\Gamma_{p,q}(\Omega)\ge S_{n,q}$.
Conversely, the scaling exponent in \eqref{eq:scaling} is zero and so every nonzero $\phi\in C_c^\infty(\mathbb{R}^n)$ can
 be translated and shrunk into a ball contained
in $\Omega$ without changing its quotient.
Taking the infimum over these functions gives
$\Gamma_{p,q}(\Omega)\le S_{n,q}$.

At $(p,q)=(\infty,n/2)$, choose a ball $\overline{B_R(x_0)}\subset\Omega$
and a smooth $\eta:\R\to[0,1]$ equal to $0$ for $s\le0$
and $1$ for $s\ge1$. Set
\[
 \chi_\tau(x)=\eta\!\left(
   \frac{1}{\tau}\log\frac{R}{|x-x_0|}
 \right),
 \qquad \chi_\tau(x_0)=1,\qquad \tau\ge1.
\]
This function equals $1$ near $x_0$ and vanishes outside
$B_R(x_0)$, and so $\chi_\tau\in C_c^\infty(\Omega)$
with $\norm{\chi_\tau}_\infty=1$.
Radial differentiation on the transition annulus
$Re^{-\tau}<|x-x_0|<R$ gives
\[
 \norm{\Delta\chi_\tau}_{n/2}
 \le C \tau^{2/n-1}\longrightarrow0
\]
as $\tau \to \infty$. Hence $\Gamma_{\infty,n/2}(\Omega)=0$.

{
At the other endpoint of the segment, put $p=n/(n-2),q=1$, and suppose for the sake of obtaining a 
contradiction that $\Gamma_{p,1}(\Omega)>0$. Then
\[
 \norm{\phi}_p\le C_\Omega\norm{\Delta\phi}_1,
 \qquad \phi\in C_c^\infty(\Omega).
\]
Applying this estimate to differences shows that every Cauchy
sequence in the Laplacian norm is Cauchy in $L^p$.
Completeness of $L^p$ therefore extends the estimate to the
completion $W_{\Delta,0}^{2,1}(\Omega)$ of $C_c^\infty(\Omega)$
under that norm.
Since $L^p$ has the Fatou property, Fontana--Morpurgo's optimal
target theorem~\cite[Theorem~2, (55)]{FontanaMorpurgo2014}
would then imply $L^{p,\infty}(\Omega)\subset L^p(\Omega)$.
This inclusion is false: for any interior ball $B_R(x_0)$,
the function $|x-x_0|^{2-n}\mathbf{1}_{B_R(x_0)}(x)$ belongs
to weak $L^p$ but not to $L^p$, since its $p$-th power has
radial integral proportional to $\int_0^R d\varrho/\varrho=\infty$.
Hence $\Gamma_{p,1}(\Omega)=0$.
}
\end{proof}

\section{Interval proof --- inequality in Proposition~\ref{prop:interval}}
\label{sec:one-dimension}

We prove inequality \eqref{eq:interval-comparison} by closing the gaps between components. Write $\Omega=\bigcup_j I_j$ as a disjoint union of intervals of
lengths $L_j>0$, so that $\sum_j L_j=L$.
By Lemma~\ref{lem:smooth-trials}, it suffices to
consider $u\in C_c^\infty(\Omega)$. Only finitely many components meet its
support. Translate these components consecutively into $(0,L)$ and
put $u=0$ on the unused part. The resulting function belongs to
$C_c^\infty(0,L)$ and has the same $L^p$ and second-derivative $L^q$
norms as $u$. Taking the infimum with respect to $u$ proves $\Gamma_{p,q}(\Omega)\ge\Gamma_{p,q}(\Omega^*)$. 
The equality case is treated in Section~\ref{sec:interval-equality}.
\qed

\section{Positivity and attainment --- proof of Proposition~\ref{prop:attainment}}
\label{sec:attainment}

We prove compact embeddings, positivity and attainment, and then treat
endpoint nonattainment.

\subsubsection*{Part (\ref{part:attainment-compactness}): compact embeddings}
{
At $q=\infty$, a sequence bounded in $\norm{\Delta u}_\infty$
is bounded in $W^{2,r}$ for every finite $r>1$, by
\eqref{eq:wc-bound}. Choose $r>n$. Zero extension to a fixed
enclosing ball and compact Sobolev embedding give a uniformly
convergent subsequence, and hence convergence in every $L^p$.
This proves the compact embedding of $W_C^{2,\infty}$ into $L^p$ in every
dimension. In the remaining paragraphs, assume $q<\infty$.
}

In dimension one, zero extension to a fixed enclosing interval $I$,
H\"older's inequality and the compact embedding
$W^{2,1}(I)\hookrightarrow C(\overline I)$ give a compact embedding
$W_0^{2,q}(\Omega)\hookrightarrow L^p(\Omega)$ for every
$1\le p\le\infty, 1\le q<\infty$.

In dimension two, $W_0^{2,q}(\Omega)$ is contained in the Sobolev space
of a fixed enclosing disk. For $q>1$, Sobolev--Morrey and Rellich
theorems~\cite[Chapter~7]{GilbargTrudinger2001} give compact embeddings
into $C^0$ and hence every $L^p$.
For $q=1$, note the proposition requires $1\le p<\infty$.
The fundamental theorem of calculus applied twice gives
\[
 \norm{u}_\infty\le\norm{\partial_{12}u}_1 ,
 \qquad u\in C_c^\infty(\Omega),
\]
and density extends this estimate to $W_0^{2,1}$.
The compact embedding of $W^{2,1}_0$ into $L^1$ from Rellich's theorem and
interpolation with the preceding bound give compact embeddings into $L^p$ for every finite $p$.

{
For $n\ge3$, zero extension to a fixed enclosing ball $B$ and
the Sobolev--Morrey and Rellich theorems
\cite[Chapter~7]{GilbargTrudinger2001}
give compact embeddings into $L^p(\Omega)$ for finite $p$ whenever
$1/q<1/p+2/n$, and into $C^0(\overline B)$ and hence
$L^\infty(\Omega)$ when $q>n/2$.
}

\subsubsection*{Part (\ref{part:attainment-compactness}): positivity of the infimum}
We prove $\norm{u}_p\le C\norm{\Delta u}_q$, so that
$\Gamma_{p,q}(\Omega)\ge C^{-1}>0$. 

For $q=\infty$, this bound follows
from \eqref{eq:wc-bound} and Sobolev embedding at a fixed $r>n$.

Suppose $q<\infty$. In dimension $n=1$, the required bound follows from the estimate $\norm{u}_{L^\infty(I)}\le C\norm{u''}_{L^1(I)}$. For $n\ge2$, extend $u\in C_c^\infty(\Omega)$
by zero and fix $D>\operatorname{diam}\Omega$. With $\Phi$ the fundamental
solution of $-\Delta$, the representation formula
\cite[formula~(2.17), p.~18]{GilbargTrudinger2001} gives
$u=\Phi*(-\Delta u)$ and hence $|u|\le K_D*|\Delta u|$ on $\Omega$, with
\[
 K_D(z)=\mathbf1_{\{|z|<D\}}
 \begin{cases}
 (2\pi)^{-1}\log(D/|z|),&n=2,\\
 \kappa_n|z|^{2-n},&n\ge3.
 \end{cases}
\]
In the plane,
the added constant in the logarithm cancels because
$\int_\Omega\Delta u\,dx=0$.
The kernel belongs to $L^r$ for every finite $r$ when $n=2$, and
for $1\le r<n/(n-2)$ when $n\ge3$. If $p \geq q$, take
$1/r=1+1/p-1/q \leq 1$. The hypothesis \(1/q<1/p+2/n\) gives \(1/r>(n-2)/n\), and hence \(K_D\in L^r\). Young's convolution inequality now gives $\norm{u}_{L^p(\Omega)}
\le C   \norm{\Delta u}_{L^q(\Omega)}$ with $C=\norm{K_D}_{L^r(\mathbb R^n)}$. 
The case $p<q$ follows by H\"{o}lder. Density extends the estimate to $u \in W_0^{2,q}(\Omega)$.

\subsubsection*{Part (\ref{part:attainment-existence}): attainment for $1<q\le\infty$}

First suppose $1<q<\infty$. Normalize an infimizing
sequence by $\norm{u_j}_p=1$. Since $1<q<\infty$, the Calder\'on--Zygmund estimate 
\eqref{eq:whole-space-cz} and Poincar\'e inequalities on a fixed
enclosing ball give $ \norm{u_j}_{W^{2,q}(\Omega)}
 \le C\norm{\Delta u_j}_{L^q(\Omega)}$. Reflexivity yields
a weakly convergent subsequence in $W_0^{2,q}(\Omega)$; compact
embedding gives strong convergence in $L^p$ for all $p$ covered by the hypotheses of the proposition. The limit has $L^p$ norm $1$, and weak lower
semicontinuity of the Laplacian norm proves attainment.

{
For $q=\infty$, normalize a minimizing sequence in
$W_C^{2,\infty}(\Omega)$ by $\norm{u_j}_p=1$.
Equation~\eqref{eq:wc-bound} and part~(\ref{part:attainment-compactness})
give a subsequence converging uniformly in $L^\infty$ and also weakly in $W^{2,r}$
at every integer exponent $r\ge2$.
Weak closure of $W_0^{2,r}$ and inclusion between finite exponents
put the limit $u$ in every $W_0^{2,r}$, $1<r<\infty$.
A weak-star limit of the Laplacians equals $\Delta u$
distributionally, with
$\norm{\Delta u}_\infty\le\liminf_j\norm{\Delta u_j}_\infty$.
Uniform convergence preserves $\norm{u}_p=1$, including when $p=\infty$.
Thus $u\in W_C^{2,\infty}(\Omega)$ attains the infimum.
}

\subsubsection*{Part (\ref{part:attainment-finite-endpoints}): nonattainment for $q=1$ and $1 \leq p < \infty$}

Suppose that $u$ attains
$\gamma=\Gamma_{p,1}(\Omega)>0$, normalized by $\norm{u}_p=1$.
Let $p'$ be the H\"older conjugate of $p$ and choose
$g\in L^{p'}(\Omega)$ with $\norm{g}_{p'}=1$ and
$\int_\Omega gu\,dx=1$. Note from part~(\ref{part:attainment-compactness}) that
$W_0^{2,1}(\Omega)\hookrightarrow L^p(\Omega)$ under the hypotheses of the proposition,
and so every $\phi\in W_0^{2,1}(\Omega)$ is an admissible trial function. 

For every $\phi\in W_0^{2,1}(\Omega)$,
\[
 \gamma\left|\int_\Omega g\phi\,dx\right|
 \le\norm{\Delta\phi}_1.
\]
The map $-\Delta\phi\mapsto\gamma\int_\Omega g\phi\,dx$ is therefore a bounded
functional on a subspace of $L^1(\Omega)$. Hahn--Banach gives
$\xi\in L^\infty(\Omega)$ with $\norm{\xi}_\infty\le1$ and
\[
 -\int_\Omega\xi\Delta\phi\,dx
 =\gamma\int_\Omega g\phi\,dx
 \quad\text{for all }\phi\in W_0^{2,1}(\Omega).
\]
Equality at $\phi=u$ forces $\xi=\operatorname{sgn}(-\Delta u)$
almost everywhere on $E=\{\Delta u\ne0\}$. Distributionally,
$-\Delta\xi=\gamma g$. Interior elliptic regularity gives
$\xi\in W_{\rm loc}^{2,p'}$ when $p>1$; when $p=1$, it gives
$\xi\in W_{\rm loc}^{2,r}$ for every finite $r>1$.
A Sobolev function has gradient zero almost everywhere on each of
its level sets~\cite[Lemma~7.7, p.~152]{GilbargTrudinger2001}.
Applying this fact first to $\xi$ and then to its first derivatives
gives $D^2\xi=0$ almost everywhere on $\{\xi=\pm1\}$.
Thus $-\Delta\xi=\gamma g=0$
almost everywhere on $E$, and our choice of $g$ implies $u=0$
there. The same two applications of the level set property to
$u\in W^{2,1}$ give $\Delta u=0$ almost everywhere on $E$, a
contradiction because then $\gamma=0$.

\subsubsection*{Part (\ref{part:attainment-one-dimensional-q1}), $n=1$: nonattainment at $(p,q)=(\infty,1)$}
{
Attainment of
$\Gamma_{\infty,1}(I)=8/|I|$ for an interval is precluded by the strict estimate
$\norm{v''}_1>(8/|I|)\norm{v}_\infty$ that was proved in
Appendix~\ref{app:interval-pinfty}. 

For general $\Omega$, the zero extension of any
$0\ne u\in W_0^{2,1}(\Omega)$ is continuous with compact support,
and so $|u|$ reaches its positive maximum on some component $I$.
Restricting smooth approximations gives $u|_I\in W_0^{2,1}(I)$,
while extending interval trial functions by zero gives
$\Gamma_{\infty,1}(I)\ge\Gamma_{\infty,1}(\Omega)$. Hence
\[
 \frac{\norm{u''}_{L^1(\Omega)}}{\norm{u}_{L^\infty(\Omega)}}
 \ge\frac{\norm{u''}_{L^1(I)}}{\norm{u}_{L^\infty(I)}}
 >\Gamma_{\infty,1}(I)
 \ge\Gamma_{\infty,1}(\Omega),
\]
and so $u$ cannot attain the infimum.
}

\section{Equality cases: disks, balls and intervals}
\label{sec:equality}

We complete Theorems~\ref{thm:disk} and~\ref{thm:nd-infty}
and Proposition~\ref{prop:interval} by identifying the equality cases.
The paired comparison equality cases from \cite{BL2026} handle
disks and balls; capacity cutoffs explain the planar $q=1$
exception, and component estimates settle intervals.

\subsection{Equality for clamped functions via paired comparison}
\label{sec:comparison-equality}

The companion paper's Theorems~1 and~2 in \cite{BL2026} identify the domain up to
a set of zero $H^1$-capacity and the solution up to translation.
For our clamped functions, continuity excludes the exceptional set.
The following lemma extends the comparison to the source exponents
needed here and rules out the two-disk alternative at the planar
$L^1$ endpoint.

\begin{lemma}[Equality for clamped paired comparison]
\label{lem:clamped-equality}
Let $\Omega\subset\R^n$ be nonempty, bounded and open. Assume $n=2$ and
$1\le p\le\infty$, or else $n \geq 3$ and
$p=\infty$. Suppose $n/2<r<\infty$ and $0\ne u\in W_0^{2,r}(\Omega)$. If 
$f=-\Delta u$ and $v,w$ solve \eqref{eq:ballproblems} on the equal-volume ball $B$, then
\[
 \norm{u}_{L^p(\Omega)}
 \le\max\{\norm{v}_{L^p(B)},\norm{w}_{L^p(B)}\}.
\]
If equality holds, then $\Omega=c+B$ for some $c\in\R^n$ and
$u(x)=v(x-c)$ or $u(x)=-w(x-c)$.
\end{lemma}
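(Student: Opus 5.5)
The plan is to reduce to the companion paper's Theorems~1 and~2 in \cite{BL2026} by approximation, and then use the clamped boundary behaviour of $u$ to sharpen their equality conclusion. I assume those theorems are stated for mean-zero sources in some class, say bounded or $L^2$, with equality characterized up to a set of zero $H^1$-capacity and up to translation. Then $u$ must be pushed into that class.

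\textbf{Inequality.} Take $u_j\in C_c^\infty(\Omega)$ with $u_j\to u$ in $W^{2,r}$, and set $f_j=-\Delta u_j$. Each $f_j$ is bounded with mean zero, so the inequality proved in Section~\ref{sec:planarproof} (for $n=2$) and in Section~\ref{sec:all-dimensions} (for $p=\infty$) gives
\[
\norm{u_j}_p\le\max\{\norm{v_j}_p,\norm{w_j}_p\}.
\]
Since $r>n/2$, Sobolev embedding on an enclosing ball gives $u_j\to u$ uniformly, so $\norm{u_j}_p\to\norm{u}_p$ for every $p$. Signed rearrangement is an $L^r$ contraction, so $f_j^*\to f^*$ and $(-f_j)^*\to(-f)^*$ in $L^r(B)$. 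The radial Dirichlet solution operator is bounded from $L^r(B)$ to $W^{2,r}(B)\subset C(\overline B)$ (Gilbarg--Trudinger), so $v_j\to v$ and $w_j\to w$ uniformly. Passing to the limit proves the inequality.

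\textbf{Equality.} This is the main obstacle, because approximation does not transfer equality cases. I would first check whether the proofs in \cite{BL2026} work directly for mean-zero $f\in L^r$ with $r>n/2$. Their Talenti-type arguments on level sets only need $u$ continuous and in $W^{1,2}$, plus $f\in L^1$ with enough integrability to make the potentials bounded. I would verify this and then invoke their equality statement. That statement yields $\Omega\setminus Z=c+B$ with $\operatorname{cap}(Z)=0$, and $u$ equal to a translate of $v$ or of $-w$.

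\textbf{Removing the exceptional set.} Suppose $u=v(\cdot-c)$ on $\Omega$, with $\Omega\subset c+B$ up to the capacity-zero set $Z=(c+B)\setminus\Omega$. The zero extension of $u$ is $C^1$ (when $r>n$) or continuous, and it vanishes on $\Omega^c\supset Z$. So $v(\cdot-c)$ vanishes on $Z$.

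\begin{itemize}
\item If $Z$ is nonempty, pick $x\in Z$. Radial monotonicity of the profile from \eqref{eq:radial-flux} forces $v$ to vanish on a whole sphere through $x-c$.
\item Together with the clamped condition $v=v'=0$ on $\partial B$, this contradicts the structure of the equality case: there the profile vanishes only at $|x|=R$ when the source is one-signed on a core.
\end{itemize}

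This is where the planar ``two-disk'' alternative at $p=1$ must also be excluded. In that alternative, equality would force $u$ to be of the form $v$ on one disk and $-w$ on another, glued along a circle. Clampedness, namely $\nabla u=0$ on $\partial\Omega$ together with the mean-zero flux identity \eqref{eq:radial-flux} at the outer radius, makes the glued function fail to be $W^{2,r}$. Equivalently, it makes the flux of $f$ through the separating circle nonzero, which is incompatible with $u=0$ on both sides.

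Once $Z=\emptyset$, openness gives $\Omega=c+B$ exactly. I expect the delicate points to be making the capacity argument rigorous and treating the two-disk alternative at $p=1$.
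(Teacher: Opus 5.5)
Your outline is sound, and for the equality case it follows the paper's route. The paper also extends the proofs of \cite[Theorems~1 and~2]{BL2026} to mean-zero $f\in L^r$ with $r>n/2$, checking the following points:
\begin{itemize}
\item $\phi\mapsto\int_\Omega f\phi\,dx$ is continuous on $H_0^1$;
\item $v,w\in W^{2,r}(B)\subset L^\infty(B)$;
\item truncations of $u_\pm$ are admissible test functions;
\item the Hardy--Littlewood step $\int_\Omega gh\,dx\le\int_B g^*h^*\,dx$ converges absolutely;
\item $F(s)=\int_0^s f^\downarrow(t)\,dt$ is concave, vanishes at $0$ and $1$, and is positive in between.
\end{itemize}
It then applies their equality statements.

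The difference is the inequality. The paper reads it off from those same extended proofs. You instead obtain it by density from the $C_c^\infty$ case, using $L^r$-nonexpansivity of signed rearrangement and the bound $L^r(B)\to W^{2,r}(B)\subset C(\overline B)$ for the Dirichlet problem. Your limiting argument is valid and keeps the inequality independent of the internals of \cite{BL2026}. As you observe, though, it cannot pass equality to the limit, so the extension is needed anyway, and the paper's single route is more economical.

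Two of your equality steps should be stated more precisely.

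\emph{Exceptional set.} To remove it you only need $v,w>0$ throughout $B$. This follows from \eqref{eq:radial-flux}: $v'(\varrho)=-c\,\varrho^{1-n}F\bigl((\varrho/R)^n\bigr)$ with $c>0$, which is negative on $(0,R)$ because $F>0$ on $(0,1)$ (as $f\not\equiv0$); the same holds for $w$. Continuity of the zero extension of $u$ then excludes every point of $Z$ at once. The clamped condition on $\partial B$ and the structure of the equality case play no role here. Also, vanishing on a whole sphere comes from radial symmetry, not from monotonicity.

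\emph{Two-disk alternative at $p=1$.} In \cite[Theorem~1]{BL2026} this alternative is not a function glued along a circle. It is $\Omega=D_+\cup D_-$, two disjoint disks carrying translated \emph{truncated} radial potentials of opposite sign. The contradiction is as follows:
\begin{itemize}
\item $u|_{D_+}\in W_0^{2,r}(D_+)$ forces $\int_{D_+}f\,dx=0$;
\item the radial Poisson equation for the truncated potential gives $\int_{D_+}f\,dx=|\Omega|F(|D_+|/|\Omega|)>0$.
\end{itemize}
Your flux heuristic is essentially this argument. However, the operative boundary condition is $\nabla u=0$, not $u=0$: nonzero flux is perfectly compatible with $u=0$ on the circle. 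Nor is there any interface across which $W^{2,r}$ regularity could fail.
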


\begin{proof}
{
The hypotheses imply $u\in H_0^1(\Omega)\cap L^\infty(\Omega)$
and $\int_\Omega f\,dx=0$.
Although the theorems in \cite{BL2026} are stated for $L^2$
sources, their proofs apply here for the following reasons.

Since $r>n/2$, H\"older's inequality and Sobolev embedding show
that $\phi\mapsto\int_\Omega f\phi\,dx$ is continuous on
$H_0^1(\Omega)$, and elliptic regularity gives
$v,w\in W^{2,r}(B)\subset L^\infty(B)$.
Thus the positive and negative parts of $u$, and their level set
truncations, are valid test functions in the weak equations. For the key equality argument in the proof of \cite[Lemma~7]{BL2026}, take $(h,g)=(u_+,f)$ or $(u_-,-f)$ and note $f \not \equiv 0$ since $u$ is not the zero function. Testing the equation for $u$ or
$-u$, respectively, with $h$ and applying Hardy--Littlewood give
\[
 \int_\Omega|\nabla h|^2\,dx
 =\int_\Omega gh\,dx
 \le\int_B g^*h^*\,dx.
\]
Both integrals on the right are absolutely convergent,
since $g,g^*$ are integrable and $h,h^*$ are bounded.

The remaining steps of the equality proofs in \cite[Sections 5 and 7]{BL2026} carry through since the primitive $F(s)=\int_0^s f^\downarrow(t)\,dt$ is continuous and concave,
vanishes at both endpoints and is positive on $(0,1)$. Thus the equality conclusions of \cite[Theorems~1 and~2]{BL2026} remain valid in the current situation.
}

Now suppose equality holds in the conclusion of
Lemma~\ref{lem:clamped-equality}. For $n=2$ and $1<p<\infty$, apply the equality statement of
\cite[Theorem~1]{BL2026}, and for $n \geq 2$ and $p=\infty$ apply
\cite[Theorem~2]{BL2026}. After translating $\Omega$ and $u$, these statements identify
$\Omega$ with $B$, possibly with an exceptional set of zero
$H^1$-capacity removed, and identify $u$ with $v$ or $-w$
almost everywhere. The zero extension of $u$ is continuous on $\R^n$ and vanishes at every point outside $\Omega$, by Sobolev--Morrey embedding and uniform
convergence of smooth approximations, since $2r>n$. By continuity, the zero extension of $u$ agrees with $v$ or
$-w$ at every point of $B$. Since $v,w>0$ in $B$, the
exceptional set must be empty. Thus $\Omega$ is a ball. 

At $(n,p)=(2,1)$, the same argument handles the one-ball case of the equality statement of \cite[Theorem~1]{BL2026}.
The remaining possibility there is that 
$\Omega=D_+\cup D_-$ is a union of two disjoint disks,
with $u$ given by translated truncated radial potentials of opposite
signs (after excluding an exceptional set by continuity). 
The radial Poisson equation for the positive truncated potential gives $\int_{D_+}f\,dx=|\Omega|F(|D_+|/|\Omega|)>0$. 
On the other hand, $u|_{D_+}\in W_0^{2,r}(D_+)$ and so
$\int_{D_+}f\,dx=0$. This contradiction excludes the two-disk
alternative and completes the equality proof.
\end{proof}

\subsection{Equality in Theorem~\ref{thm:disk} for \texorpdfstring{$q>1$}{q > 1}}
\label{sec:planar-equality}

Let $n=2$ and $1<q \leq \infty$. 
Let $B=B_R$ be the equal-area disk and suppose
$\Gamma_{p,q}(\Omega)=\Gamma_{p,q}(B)$.

Proposition~\ref{prop:attainment}(\ref{part:attainment-existence})
supplies a minimizer $u$ for $\Gamma_{p,q}(\Omega)$ in \eqref{eq:gamma}, which we may normalize by $\norm{u}_p=1$. Put $f=-\Delta u$, let $v,w$ solve
\eqref{eq:ballproblems} and set $M=\max\{\norm{v}_p,\norm{w}_p\}$.
Choose $z\in\{v,w\}$ with $\norm{z}_p=M$.
For $1<q<\infty$, elliptic regularity gives
$v,w\in W^{2,q}(B)$. Their boundary values vanish, and
the mean-zero sources $f^*$ and $(-f)^*$ give vanishing normal derivatives. The trace characterization used after formula \eqref{eq:radial-flux} therefore gives $v,w\in W_0^{2,q}(B)$.
When $q=\infty$, the same argument applies at every finite
exponent $r>1$, and since their negative Laplacians $f^*$ and $(-f)^*$ are bounded, we get 
$v,w\in W_C^{2,\infty}(B)$.
For all $q$, Sobolev embedding also gives $v,w\in L^\infty(B)$,
hence membership in $L^p(B)$ for every $1\le p\le\infty$.
Thus $z$ is an admissible trial function for $\Gamma_{p,q}(B)$.

Signed equimeasurability and the inequality in Lemma~\ref{lem:clamped-equality} now give
\[
 \Gamma_{p,q}(B)
 \le\frac{\norm{f}_q}{M}
 \le\frac{\norm{f}_q}{\norm{u}_p}
 =\Gamma_{p,q}(\Omega).
\]
Equality holds throughout and so $\norm{u}_p=M$.
The equality statement in Lemma~\ref{lem:clamped-equality}, in which we take $r=q$ when $1<q<\infty$
and take any finite $r>1$ when $q=\infty$, now yields that $\Omega$ is a disk.

Conversely, if $\Omega$ is a disk then equality holds in the theorem by translation invariance.
\qed

\subsection{The \texorpdfstring{$q=1$}{q = 1} exception}
\label{sec:q1-equality}

At $(p,q)=(\infty,1)$, the logarithmic construction in
Section~\ref{sec:planarproof} gives that $\Gamma_{\infty,1}=0$ on every set, so that both sides of \eqref{eq:main} equal zero. 

Now suppose $q=1$ and $1 \leq p < \infty$. In this case equality holds in the theorem if $\Omega$ is a disk with an exceptional set of capacity zero (and hence measure zero) removed, by the next result. We do not claim ``only if''. 

\begin{proposition}[Removing sets of zero capacity]
\label{prop:capacity-removal}
If $G\subset\R^2$ is a nonempty bounded open set and
$K\subset G$ has zero $H^1$-capacity with $G\setminus K$ open, then
\[
\Gamma_{p,1}(G) = \Gamma_{p,1}(G\setminus K) , \qquad 1\le p<\infty .
\]
\end{proposition}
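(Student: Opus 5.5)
The inequality $\Gamma_{p,1}(G)\le\Gamma_{p,1}(G\setminus K)$ is immediate: $C_c^\infty(G\setminus K)\subset C_c^\infty(G)$, so by Lemma~\ref{lem:smooth-trials} the infimum over $G$ runs over a larger class. For the reverse inequality, fix $u\in C_c^\infty(G)$. We will build $u_j\in C_c^\infty(G\setminus K)$ with $\norm{\Delta u_j}_1\to\norm{\Delta u}_1$ and $\norm{u_j}_p\to\norm{u}_p$. Taking the infimum over $u$ and applying Lemma~\ref{lem:smooth-trials} on $G\setminus K$ then gives $\Gamma_{p,1}(G\setminus K)\le\Gamma_{p,1}(G)$.

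The construction is $u_j=(1-\psi_j)u$ with cutoffs $\psi_j$ that equal $1$ near $K\cap\operatorname{supp}u$. Since $\operatorname{supp}u$ is compact, $K\cap\operatorname{supp}u$ is a compact set of zero $H^1$-capacity. Note that $K$ itself is relatively closed in $G$, because $G\setminus K$ is open. Zero capacity supplies $\psi_j\in C_c^\infty(\R^2)$ with $0\le\psi_j\le1$, $\psi_j=1$ on a neighborhood of $K\cap\operatorname{supp}u$, and $\norm{\psi_j}_{H^1}\to0$. Then $u_j$ vanishes near $K$ and is compactly supported in $G\setminus K$. Also $u_j\to u$ in $L^p$ by dominated convergence, since $\psi_j\to0$ in measure.

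Expanding
\[
 \Delta u_j=(1-\psi_j)\Delta u-2\nabla\psi_j\cdot\nabla u-u\,\Delta\psi_j ,
\]
the first term tends to $\Delta u$ in $L^1$. The second is bounded by $2\norm{\nabla u}_\infty\norm{\nabla\psi_j}_1$, which tends to zero because $\nabla\psi_j\to0$ in $L^2$ on a bounded region.

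The main obstacle is the term $u\,\Delta\psi_j$, because $H^1$-smallness gives no control on $\norm{\Delta\psi_j}_1$. I would handle it in two ways.

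\emph{First approach: choose better cutoffs.} In the plane, capacity zero sets are compact sets of logarithmic capacity zero. They can be covered by finitely many small disks $B_{r_i}(x_i)$ with $\sum_i 1/\log(1/r_i)$ arbitrarily small. On each disk, use the logarithmic cutoff from the zero corner point argument in Section~\ref{sec:planarproof}. That argument showed that $\norm{\Delta\chi_\varepsilon}_1$ is at most $2\pi\int_0^1|\eta''|\,dt$ divided by $\tau$. Taking $\psi_j$ to be a maximum or product of these local cutoffs (smoothed) then makes $\norm{\Delta\psi_j}_1$ at most a constant times $\sum_i 1/\tau_i$, which is small. The smoothing step and the covering lemma that converts capacity zero into this sum condition are the delicate points.

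\emph{Second approach: integrate by parts.} One can move a derivative off $\psi_j$:
\[
 u\Delta\psi_j=\operatorname{div}(u\nabla\psi_j)-\nabla u\cdot\nabla\psi_j .
\]
This only gives weak control. Even so, it would suffice to prove $\liminf_j\norm{\Delta u_j}_1\le\norm{\Delta u}_1$, which may be achievable by choosing $\psi_j$ radial-logarithmic near each point of a cover.

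I would pursue the first approach, the explicit logarithmic cutoffs, as the primary route.
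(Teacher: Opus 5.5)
\textbf{There is a genuine gap: the covering lemma your primary route depends on is false.}

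Your outer scheme matches the paper's: $u_j=(1-\psi_j)u$, the product rule, dominated convergence, and Lemma~\ref{lem:smooth-trials}. You also correctly isolate $\norm{u\,\Delta\psi_j}_1$ as the only real difficulty.

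The failing step is the one meant to make $\norm{\Delta\psi_j}_1$ small. You claim that $K\cap\operatorname{supp}u$ can be covered by small disks with $\sum_i 1/\log(1/r_i)$ arbitrarily small. That says the set has zero Hausdorff measure for the gauge $h(r)=1/\log(1/r)$, which is strictly stronger than zero capacity. Only the converse holds: by a classical result going back to Erd\H{o}s and Gillis, $\mathcal H^h(E)<\infty$ already forces logarithmic capacity zero.

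A counterexample is the planar four-corner Cantor set $E$ whose $k$-th generation consists of $4^k$ squares of side $\ell_k=e^{-4^k}$. The generation-$k$ covers give $\mathcal H^h(E)\le 1$, so $E$ has zero capacity. But the uniform measure $\mu$ on $E$ satisfies $\mu(B(x,r))\le C/\log(1/r)$ for small $r$. By the mass distribution principle, every cover of $E$ by disks therefore has $\sum_i 1/\log(1/r_i)\ge 1/C$.

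A logarithmic cutoff on $B_{r_i}(x_i)$ has $\norm{\Delta\chi_i}_1$ of order $1/\log(1/r_i)$, which is the capacity of that disk and cannot be improved. So the bound $C\sum_i 1/\tau_i$ that your first approach rests on cannot be made small for this $E$. Your second approach is only a heuristic and uses the same cover, so it does not close the gap.

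\textbf{What is missing is a single nonlocal cutoff built from a capacitary potential.} This is the Brezis--Marcus--Ponce lemma the paper invokes. It gives $\chi_j\in C_c^\infty(G_0)$ with $0\le\chi_j\le1$, $\chi_j=1$ near $K\cap\operatorname{supp}u$, and $\norm{\Delta\chi_j}_1\to0$.

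One direct construction runs as follows.
\begin{enumerate}
\item Zero capacity gives $\varphi\in C_c^\infty(G_0)$ with $\varphi\ge1$ on $K\cap\operatorname{supp}u$ and small Dirichlet energy. A superlevel set $F=\{\varphi\ge t\}$ at a regular value $t\in(1/2,1)$ is then a smooth compact neighborhood of $K\cap\operatorname{supp}u$ with small condenser capacity in $G_0$.
\item Let $z$ be the capacitary potential of $F$, harmonic in $G_0\setminus F$ and equal to $1$ on $F$.
\item Set $\chi=H(z)$, where $H$ is smooth, $H=0$ on $[0,1/3]$ and $H=1$ on $[2/3,1]$.
\end{enumerate}
Then $H'(z)$ vanishes wherever $\Delta z\ne0$. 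Hence $\Delta\chi=H''(z)|\nabla z|^2$ and $\norm{\Delta\chi}_1\le\norm{H''}_\infty\operatorname{cap}(F,G_0)$. One harmonic function handles the whole transition layer, so the cost is the capacity of $F$ rather than a sum of capacities of covering disks.

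Once $\norm{\Delta\chi_j}_1\to0$, the paper obtains $\norm{\nabla\chi_j}_1\to0$ from $\norm{\nabla\chi_j}_2^2=-\int\chi_j\Delta\chi_j\,dx\le\norm{\Delta\chi_j}_1$. So your separate $H^1$-small cutoffs are not needed, and the rest of your argument goes through unchanged.
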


\begin{proof}
Fix $0\ne u\in C_c^\infty(G)$ and a smooth open set
$G_0\Subset G$ containing $\operatorname{supp}u$.
The cutoff lemma of Brezis, Marcus and Ponce
\cite[Lemma~4.E.1, p.~104]{BrezisMarcusPonce2007}
gives $\chi_j\in C_c^\infty(G_0)$, equal to $1$ near the compact set
$K\cap\operatorname{supp}u$, with $0\le\chi_j\le1$ and
$\norm{\Delta\chi_j}_1\to0$. Integration by parts gives
\[
 \norm{\nabla\chi_j}_2^2
 =-\int_{G_0}\chi_j\Delta\chi_j\,dx
 \le\norm{\Delta\chi_j}_1\longrightarrow0.
\]
Hence $\norm{\chi_j}_2\to0$ and 
$\norm{\nabla\chi_j}_1\to0$ and, after passing to a subsequence,
$\chi_j\to0$ almost everywhere.
Letting $u_j=(1-\chi_j)u\in C_c^\infty(G\setminus K)$, we have 
$u_j\to u$ in $L^p$ and $\Delta u_j\to\Delta u$ in $L^1$
by dominated convergence and the product rule.
Lemma~\ref{lem:smooth-trials} now gives
$\Gamma_{p,1}(G\setminus K)\le\Gamma_{p,1}(G)$.
The reverse inequality is immediate by inclusion.
\end{proof}

\subsection{Equality in Theorem~\ref{thm:nd-infty}}
\label{sec:nd-equality}

{
The proof in Section~\ref{sec:planar-equality} continues to apply with
$p=\infty$ and $n/2<q\le\infty$, for $n \geq 3$.
Proposition~\ref{prop:attainment}(\ref{part:attainment-existence})
supplies a minimizer for every such $q$. The equality $\Gamma_{\infty,q}(\Omega)=\Gamma_{\infty,q}(B)$ forces
equality in Lemma~\ref{lem:clamped-equality}, using $r=q$ for
finite $q$ and any finite $r>n/2$ for $q=\infty$.
Hence $\Omega$ is a ball. Conversely, a ball gives equality
by translation invariance.
\qed
}

\subsection{Equality in Proposition~\ref{prop:interval}}
\label{sec:interval-equality}

Let $L=|\Omega|$, write the component lengths of $\Omega$ as $L_j$
and put $L_{\max}=\max_j L_j$.
If there is only one component then $\Omega$ is an interval and
there is nothing to prove.

Suppose $\Omega$ has more than one component, write these intervals
as $I_j$ and put $C_{p,q}=\Gamma_{p,q}((0,1))$ and
$\sigma=2+1/p-1/q>0$. For a nonzero admissible trial function $u$, set
\[
 a_j=\norm{u}_{L^p(I_j)},\qquad
 b_j=\norm{u''}_{L^q(I_j)}.
\]
The restriction of $u$ to each component is clamped, and the scaling
law \eqref{eq:scaling} gives
$C_{p,q} a_j\le L_j^\sigma b_j$.
Also, $\norm{u}_p=\norm{(a_j)}_{\ell^p}$ and
$\norm{u''}_q=\norm{(b_j)}_{\ell^q}$.

If $p\ge q$, then $\norm{(a_j)}_{\ell^p}\le\norm{(a_j)}_{\ell^q}$
and so
\[
C_{p,q}  \norm{u}_p
 \le C_{p,q} \norm{(a_j)}_{\ell^q}
 \le L_{\max}^{\sigma}\norm{(b_j)}_{\ell^q}
 = L_{\max}^{\sigma}\norm{u''}_q.
\]
If $p<q$, define $s$ by $1/s=1/p-1/q>0$.
H\"older's inequality for sequences gives
\[
C_{p,q} \norm{u}_p
 \le \norm{(L_j^\sigma b_j)}_{\ell^p}
 \le 
       \norm{(L_j^\sigma)}_{\ell^s} \norm{(b_j)}_{\ell^q} = 
       \norm{(L_j^\sigma)}_{\ell^s} \norm{u''}_q.
\]
Using that $\sigma s=1+2s$, rearranging the estimates and taking the
infimum over $u$ yields
\[
 \Gamma_{p,q}(\Omega)\ge C_{p,q}
 \begin{cases}
 L_{\max}^{-\sigma},&p\ge q,\\[2pt]
 \left(\sum_j L_j^{1+2s}\right)^{\!-1/s},
   &p<q.
 \end{cases}
\]
Since $\Omega$ has more than one component, $L_{\max}<L$ and $\sum_j L_j^{1+2s}<L^{1+2s}$. Hence $\Gamma_{p,q}(\Omega)>C_{p,q}L^{-\sigma}
=\Gamma_{p,q}((0,L))$, giving strict inequality in the proposition.
\qed

\appendix
{
\section{Explicit values for the interval}
\label{app:interval-values}

We derive the four edge formulas in Table~\ref{tab:interval-values}
and the classical clamped beam value stated after
Proposition~\ref{prop:interval}. Unless otherwise stated, $I=(0,L)$.

\subsection{The edge \texorpdfstring{$p=\infty$}{p = infinity}}
\label{app:interval-pinfty}

We give an elementary proof of the formula obtained by
Oshime~\cite{Oshime2008} and reproved by
Watanabe \emph{et al.}~\cite{WatanabeEtAl2009}, also treating
the endpoints in Table~\ref{tab:interval-values}.
For $1\le q<\infty$, take $0\ne u\in W_0^{2,q}(I)$;
for $q=\infty$, take $0\ne u\in W_C^{2,\infty}(I)$.
After changing sign, choose $a\in I$ with $u(a)=\norm{u}_\infty=M>0$.
Since $u'(0)=u'(a)=0$, integration by parts and H\"older give
\[
 M=-\int_0^a(x-a/2)u''(x)\,dx
 \le\frac{a^{2-1/q}}{2(q'+1)^{1/q'}}\norm{u''}_{L^q(0,a)}.
\]
For $q<\infty$, combining with the analogous estimate on $(a,L)$
and using convexity yields
\[
 \norm{u''}_q
 \ge 2(q'+1)^{1/q'}M
      \left(a^{1-2q}+(L-a)^{1-2q}\right)^{\!1/q}
 \ge\frac{8(q'+1)^{1/q'}}{L^{2-1/q}}M.
\]
For $q=\infty$, the two estimates instead give
\[
 \norm{u''}_\infty
 \ge\frac{4M}{\min\{a^2,(L-a)^2\}}
 \ge\frac{16M}{L^2}.
\]
For $1<q\le\infty$, equality holds at $a=L/2$ for the clamped
function determined by
\[
 u''(x)=\operatorname{sgn}(L/4-x)|L/4-x|^{q'-1},
 \qquad 0<x<L/2,\qquad u(0)=u'(0)=0,
\]
and even reflection about $L/2$, using only the sign when $q=\infty$.
Oddness of $u''$ about $L/4$ gives $u'(L/2)=0$, and this choice
gives equality in H\"older.
When $q=1$, the first estimate is strict because
$|x-a/2|<a/2$ almost everywhere on $(0,a)$ and $u''$ is not zero
there. Thus $\norm{u''}_1>(8/L)\norm{u}_\infty$, proving
nonattainment. Sharpness follows from the tent sequence in
Appendix~\ref{app:interval-qone}.
\qed

\subsection{The edge \texorpdfstring{$q=1$}{q = 1}}
\label{app:interval-qone}

We prove the $q=1$ row of Table~\ref{tab:interval-values}, whose case $p=1$
is due to Parini--Ruf--Tarsi~\cite{PariniRufTarsi2015}.
For $0\ne u\in W_0^{2,1}(I)$, put $A=\norm{u''}_1$,
$\alpha=\max u'>0$ and $\beta=-\min u'>0$.
Since $u'$ starts and ends at zero, its total variation satisfies
$A\ge2(\alpha+\beta)$.
On a positive component $(a_j,b_j)$ of $\{u\ne0\}$, we have
\[
 u(x)\le\min\{\alpha(x-a_j),\beta(b_j-x)\},
\]
and on a negative component the same bound holds for $-u$ with
$\alpha,\beta$ interchanged. Each bounding triangle has base
$L_j=b_j-a_j$ and height $\alpha\beta L_j/(\alpha+\beta)$.
Integrating their $p$-th powers and summing gives
\[
 \norm{u}_p^p
 \le\frac1{p+1}\left(\frac{\alpha\beta}{\alpha+\beta}\right)^{\!p}
       \sum_j L_j^{p+1}
 \le\frac{A^pL^{p+1}}{8^p(p+1)},
\]
using $\alpha\beta\le(\alpha+\beta)^2/4$ and $\sum_jL_j\le L$.
These bounds are sharp: the symmetric height-one tent supported on
$(\varepsilon,L-\varepsilon)$ has second derivative of total variation
$8/(L-2\varepsilon)$. Mollifying inside $I$ and letting
$\varepsilon\downarrow0$ gives smooth trial functions with second-derivative
$L^1$ norm tending to $8/L$ and $L^p$ norm tending to
$(L/(p+1))^{1/p}$ for finite $p$, or $1$ for $p=\infty$.
This proves the entire $q=1$ row, including $p=\infty$;
nonattainment at that corner was proved in
Appendix~\ref{app:interval-pinfty}.
\qed

\subsection{The edge \texorpdfstring{$q=\infty$}{q = infinity}}
\label{app:interval-qinfty}

The profile of Katzourakis--Parini~\cite[Remark~3.7]{KatzourakisParini2017},
scaled to have $\norm{U''}_\infty=1$, is
\[
 U(x)=\begin{cases}
 x^2/2,&0\le x\le L/4,\\
 L^2/16-(x-L/2)^2/2,&L/4\le x\le3L/4,\\
 (L-x)^2/2,&3L/4\le x\le L.
 \end{cases}
\]
Take $0\ne u\in W_C^{2,\infty}(I)$ and normalize
$\norm{u''}_\infty=1$. On each outer quarter, $u-U$ is
concave with zero value and derivative at the adjacent endpoint, and so
$u-U\le0$. On the middle half it is convex with nonpositive endpoint
values, and hence is again nonpositive. Applying the same argument to
$-u$ gives $|u|\le U$. Thus $\Gamma_{p,\infty}(I)=\norm{U}_p^{-1}$.
Direct integration gives $\norm{U}_p=L^{2+1/p}/(16c_p)$,
including $p=\infty$ with $c_\infty=1$. This proves the $q=\infty$
row of Table~\ref{tab:interval-values}.
\qed

\subsection{The edge \texorpdfstring{$p=1$}{p = 1}}
\label{app:interval-pone}

We give a dual
form of the interpolation argument underlying the kernel proof for $(p,q)=(1,2)$ by Hindov--Nitzan--Olsen--Rydhe~\cite{HindovEtAl2025}. 
Work on $I=(-1,1)$ so that $L=2$, and write $r=q'$ so that $1/r+1/q=1$. For the formula in Table~\ref{tab:interval-values} we must show $\Gamma_{1,q}(I)=2^{1-1/r}m_r$, after which the scaling law \eqref{eq:scaling} produces the general case. 

For $1\le q<\infty$, take $0\ne u\in W_0^{2,q}(I)$;
for $q=\infty$, take $0\ne u\in W_C^{2,\infty}(I)$.
Choose $\phi\in C^\infty(\overline I)$ with $|\phi|\le1$,
and let $\psi$ solve $-\psi''=\phi$, with conditions 
$\psi(0)=\psi'(0)=0$ imposed for definiteness. For $0<a<1$, let $\ell_a$ be the affine
interpolant of $\psi$ at $\pm\sqrt a$. The interpolation
remainder gives $|\psi(t)-\ell_a(t)|\le|t^2-a|/2$ on $[-1,1]$.
Integration by parts using the clamped conditions yields that 
\[
 \int_{-1}^1u(t)\phi(t)\,dt
 =-\int_{-1}^1u''(t)(\psi(t)-\ell_a(t))\,dt
 \le\frac12\norm{t^2-a}_{L^r(-1,1)}\norm{u''}_q.
\]
Taking the supremum over $\phi$, then the minimum over $a$, gives
$\norm{u}_1\le 2^{1/r-1} m_r^{-1} \norm{u''}_q$, so that $\Gamma_{1,q}(I) \geq 2^{1-1/r}m_r$. 

To prove equality when $1<q<\infty$, choose $a \in (0,1)$ to minimize
$\norm{t^2-a}_{L^r(-1,1)}$, so that the function 
$f(t)=\operatorname{sgn}(a-t^2)|t^2-a|^{r-1}$ has $\int_{-1}^1f(t)\,dt=0$.
Evenness also gives $\int_{-1}^1tf(t)\,dt=0$. Thus
\[
 U(x)=-\int_{-1}^x(x-t)f(t)\,dt
\]
is clamped at both endpoints and has $-U''=f$.
Evenness and zero mean give $U'(0)=-\int_{-1}^0f(t)\,dt=0$
and make $U'$ odd. Since $U''=-f$ is positive on $(-1,-\sqrt a)$
and negative on $(-\sqrt a,0)$, the conditions $U'(-1)=U'(0)=0$
give $U'>0$ on $(-1,0)$. Oddness gives $U'<0$ on $(0,1)$,
and the zero endpoint values of $U$ then imply $U>0$ inside.
Next, two integrations by parts using $f=-U''$
and $U(\pm1)=U'(\pm1)=0$ give $\frac12\int_{-1}^1(a-t^2)f(t)\,dt=\int_{-1}^1 U(t) \, dt = \norm{U}_1$. 
Our choice of $f(t)$ has the same sign as $a-t^2$ and so the definition of $f$ gives  
\[
(a-t^2)f(t) = |a-t^2|^r = |f(t)|^q ,
\]
using that $r=(r-1)q$. Thus  
\[
 \norm{U}_1=\frac12\int_{-1}^1(a-t^2)f(t)\,dt
 =\frac12 \norm{a-t^2}_{L^r(-1,1)} \norm{f}_{L^q(-1,1)}.
\]
Consequently $\Gamma_{1,q}(I)=2^{1-1/r}m_r$ as wanted. 

At the endpoints $q=1$ and $q=\infty$, one finds $m_\infty=2$ and $m_1=4$,
and so the formulas are consistent with those endpoints of Appendices~\ref{app:interval-qone}
and~\ref{app:interval-qinfty}.
For $q=2$, the minimizing constant is $a=1/3$ and
$m_2=3\sqrt5/2$, recovering the previously stated value
$\Gamma_{1,2}(I)=12\sqrt5/L^{5/2}$.
\qed

\subsection{The vibrating clamped beam}

On $I=(-L/2,L/2)$, the classical ground state $u\in H_0^2(I)$ is
\[
 u(x)=\cosh(\beta_1/2)\cos(\beta_1x/L)
      -\cos(\beta_1/2)\cosh(\beta_1x/L),
\]
where $\beta_1$ is the least positive root of
$\cos\beta\cosh\beta=1$.
Since $u''''=(\beta_1/L)^4u$, the Rayleigh principle gives
$\Gamma_{2,2}(I)=\beta_1^2/L^2$.
\qed
}

\section*{Funding acknowledgment}

Richard Laugesen's research was supported by grants from the
Simons Foundation (\#964018) and the National Science Foundation
(\#2246537).
Laugesen is grateful to the Laboratoire de Mathématiques, Université
Savoie Mont Blanc, for generous travel support.

Dorin Bucur was partially supported by the ANR project STOIQUES financed by
the French Agence Nationale de la Recherche (ANR-24-CE40-2216).

\section*{Acknowledgment of AI assistance}

The authors used OpenAI's ChatGPT through Codex to explore the theorems and 
conjectures, check various endpoint arguments, locate and verify
references, and prepare this manuscript and its figures.
Responsibility for the mathematical statements and their verification
rests with the authors.

\end{document}